\newtheorem{thm}{Theorem}[section]
\newtheorem{qst}[thm]{Question}
\newtheorem{cor}[thm]{Corollary}
\newtheorem{rem}[thm]{Remark}
\newtheorem{lemma}[thm]{Lemma}
\numberwithin{equation}{section}
\newenvironment{proof}{\vspace{-0.1cm}\noindent\textbf{Proof:}}{$\Box$\\}
\tikzstyle{state}=[circle,thick,draw=black!80]
\tikzstyle{term}=[rectangle,thick,draw=black!80]
\tikzstyle{andsoon}=[rectangle,thick,draw=white!80,fill=white!20]
\tikzstyle{andsoonfill}=[rectangle,rounded corners, text width=8em, text centered,thick,draw=black!80,fill=black!10]
\tikzstyle{nameex}=[rectangle,thick]
\title{Games characterizing limsup functions and Baire class 1 functions}
\author{
M\'{a}rton Elekes\footnote{Alfr\'{e}d R\'{e}nyi Institute of Mathematics, Reáltanoda u. 13-15, 1053 Budapest, Hungary, AND Eötvös Loránd University, Institute of Mathematics, Pázmány P. sétány 1/c, 1117 Budapest, Hungary. URL: http://www.renyi.hu/$\sim$emarci E-mail: elekes.marton@renyi.hu.} \and J\'{a}nos Flesch\footnote{Maastricht University, SBE, P.O.Box 616, 6200 MD, The Netherlands. E-mail: j.flesch@maastrichtuniversity.nl.} \and Viktor Kiss\footnote{Alfr\'ed R\'enyi Institute of Mathematics, Re\'altanoda u. 13--15, H-1053 Budapest, Hungary. E-mail: kiss.viktor@renyi.hu.} \and Don\'{a}t Nagy\footnote{E\"otv\"os Lor\'and University, Institute of Mathematics, P\'azm\'any P\'eter s. 1/c, 1117 Budapest, Hungary. E-mail: m1nagdon@gmail.com.} \and M\'{a}rk Po\'{o}r\footnote{E\"otv\"os Lor\'and University, Institute of Mathematics, P\'azm\'any P\'eter s. 1/c, 1117 Budapest, Hungary. E-mail: sokmark@gmail.com} \and Arkadi Predtetchinski\footnote{Maastricht University, SBE, P.O.Box 616, 6200 MD, The Netherlands. E-mail: a.predtetchinski@maastrichtuniversity.nl.}}
\begin{document}

\maketitle

\renewcommand{\thefootnote}{}
\footnotetext{The first, third, fourth and fifth authors were supported by the National Research, Development and Innovation Office -- NKFIH, grants no.~113047, 129211 and 124749. The third author was supported by the National Research, Development and Innovation Office -- NKFIH, grant no.~128273. The fifth author was supported by the ÚNKP-19-3 New National Excellence Program of the Ministry for Innovation and Technology.}



\begin{abstract}
\noindent We consider a real-valued function $f$ defined on the set of infinite branches $X$ of a countably branching pruned tree $T$. The function $f$ is said to be a \textit{limsup function} if there is a function $u \colon T \to \mathbb{R}$ such that $f(x) = \limsup_{t \to \infty} u(x_{0},\dots,x_{t})$ for each $x \in X$. We study a game characterization of limsup functions, as well as a novel game characterization of functions of Baire class 1.
\end{abstract}

\tableofcontents\medskip

\noindent\textbf{Keywords:} Game, winning strategy, determinacy, $G_{\delta}$ set, analytic set, Cantor set, meager set, Baire class 1 function.\medskip\\
\noindent\textbf{Mathematics Subject Classification (2020)}: Primary 54C30; Secondary 54H05, 03E15.

\section{Introduction}
Throughout the paper, let $T$ be a pruned tree on a non-empty countable set $A$, and $X$ be the set of its infinite branches. We say that $f : X \to \mathbb{R}$ is a \textit{limsup function} if there exists a function $u : T \to \mathbb{R}$ such that, for every $x \in X$,
\begin{equation}\label{eqn.limsup}
f(x) = \limsup_{t \rightarrow \infty} u(x_{0},\dots,x_{t}).
\end{equation}

Payoff evaluations of limsup type are ubiquitous in gambling theory (\cite{Dubins[2014]}), in the theory of dynamic games (\cite{Levy[2017]}), and in computer science (\cite{Bru[2017]}). Limsup payoff evaluation expresses the decision maker's preference to receive high payoff infinitely often.   

We first relate limsup functions to certain well-known classes of functions. In fact, $f$ is a limsup function precisely if it is a pointwise limit of a descending sequence of lower semicontinuous functions. Pointwise limits of a descending sequence of lower semicontinuous functions have been studied e.g. in Hausdorff \cite{Hausdorff[2005]}. It follows in particular that $f$ is a limsup function exactly if its subgraph is a $\mathbf{\Pi}_{2}^{0}$  set (i.e. a $G_{\delta}$ set), and that the sum, the minimum, and the maximum of two limsup functions is a limsup function. We also deduce a characterization of Baire class 1 functions $f: X \to \mathbb{R}$: these are exactly the functions such that both $f$ and $-f$ are limsup functions.

The core of the paper is devoted to the study of two related games. The first one is the following:
\begin{center}
\begin{tabular}{cccccc}
{\rm I}& $x_{0}$ &         & $x_{1}$ &         & $\cdots$\\
{\rm II} &         & $v_{0}$ &         & $v_{1}$ & $\cdots$
\end{tabular}
\end{center}
The moves $x_{0},x_{1},\dots$ of Player I are points of $A$ such that $(x_{0},\dots,x_{t}) \in T$ for each $t \in \mathbb{N}$. The moves $v_{0},v_{1},\dots$ of Player II are real numbers. The game starts with a move of Player I, $x_{0}$. Having observed $x_{0}$, Player II chooses $v_{0}$. Having observed $v_{0}$, Player I chooses $x_{1}$, and so on. In this fashion the players produce a run of the game, $(x_{0},v_{0},x_{1},v_{1},\dots)$. Player II wins the run if $f(x_{0},x_{1},\dots) = \limsup v_{t}$. We denote this game by $\Gamma(f)$.

As we will see in Lemma \ref{l:player II wins iff f is limsup}, Player II has a winning strategy in $\Gamma(f)$ precisely when $f$ is a limsup function. Whether Player I has a winning strategy in $\Gamma(f)$ turns out to be a more subtle question. We give a sufficient condition for Player I to have a winning strategy in $\Gamma(f)$, a condition that is also necessary if either $f$ is Borel measurable (more precisely, it suffices if the sets of the form $\{x \in X : f(x) \ge r \}$ are co-analytic), or if the range of $f$ contains no infinite strictly increasing sequence, in particular if $f$ takes only finitely many values. We also show that the game $\Gamma(f)$ is determined if $f$ is Borel measurable (again, it suffices if the sets of the form $\{x \in X : f(x) \ge r \}$ are co-analytic), but not in general.

The second game, denoted by $\Gamma'(f)$, is as follows:
\begin{center}
\begin{tabular}{cccccc}
{\rm I}& $x_{0}$ &                 & $x_{1}$ &                 & $\cdots$\\
{\rm II} &         & $(v_{0},w_{0})$ &         & $(v_{1},w_{1})$ & $\cdots$
\end{tabular}
\end{center}
This game is similar to $\Gamma(f)$ except that now the moves $(v_{0},w_{0}),(v_{1},w_{1}),\dots$ of Player II are pairs of real numbers. Player II wins in $\Gamma'(f)$ if $f(x_{0},x_{1},\dots) = \limsup v_{t} = \liminf w_{t}$. We denote this game by $\Gamma'(f)$.

Player II has a winning strategy in the game $\Gamma'(f)$ precisely when he has a winning strategy in both games $\Gamma(f)$ and $\Gamma(-f)$, which is the case exactly when $f$ is in Baire class 1. Moreover, the game $\Gamma'(f)$ is always determined. This result holds for any function $f$, whether or not $f$ is Borel measurable, and is established without the aid of Martin's determinacy.

The so-called eraser game characterizing Baire class 1 functions from the Baire space to itself was constructed in Duparc \cite{Duparc[2001]}. Carroy \cite{Carroy[2014]} showed that the eraser game is determined, and Kiss \cite{Kiss[2019]} generalized the characterization to functions of arbitrary Polish spaces. Game characterizations of several other classes of functions have been considered in Semmes \cite{Semmes[2008]}, Carroy \cite{Carroy[2014]}, and Nobrega \cite{Nobrega[2019]}.

Section \ref{secn.top} discusses characterizations of limsup functions. Section \ref{secn.gamelimsup} and Section \ref{secn.gamebaire} are devoted to the analysis of the games $\Gamma(f)$ and $\Gamma'(f)$, respectively.

\section{Characterizations of limsup functions}\label{secn.top}
 For $s \in T$, we let $O(s)$ denote the set of $x \in X$ such that $s$ is an initial segment of $x$. We refer to $O(s)$ as a cylinder set. We endow $X$ with its usual topology, generated by the base consisting of all cylinder sets. For a function $f : X \to \mathbb{R}$ write $\mathop{subgr}(f) = \{(x,r) \in X \times \mathbb{R}: f(x) \ge r\}$ to denote the \textit{subgraph} of $f$. For $r \in \mathbb{R}$, we write $\{f \ge r\} = \{x \in X: f(x) \ge r\}$, $\{f > r\} = \{x \in X: f(x) > r\}$ and $\{f=r\} = \{x\in X: f(x)= r\}$.


\begin{thm}\label{thm.charlimsup}
Consider a function $f : X \to \mathbb{R}$. The following conditions are equivalent:
\begin{enumerate}[label={\rm[C\arabic*]}]
\item \label{item.limsup} The function $f$ is a limsup function.
\item \label{item.converging} There is a sequence $g_{0}, g_{1},\dots$ of lower semicontinuous functions converging pointwise to $f$.
\item \label{item.decreasing} There is a non-increasing sequence $g_{0} \geq g_{1} \geq \cdots$ of lower semicontinuous functions converging pointwise to $f$.
\item \label{item.subgraph} The set $\mathop{subgr}(f)$ is a $\mathbf{\Pi}_{2}^{0}$ subset of $X \times \mathbb{R}$.
\item \label{item.contoursets} For each $r \in \mathbb{R}$, $\{f \ge r \}$ is a $\mathbf{\Pi}_{2}^{0}$ subset of $X$.
\end{enumerate}
\end{thm}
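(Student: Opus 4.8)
The plan is to prove the five conditions equivalent through the cycle $\ref{item.limsup}\Rightarrow\ref{item.decreasing}\Rightarrow\ref{item.subgraph}\Rightarrow\ref{item.contoursets}$, together with the return arrows $\ref{item.contoursets}\Rightarrow\ref{item.subgraph}$, $\ref{item.subgraph}\Rightarrow\ref{item.decreasing}$, the trivial $\ref{item.decreasing}\Rightarrow\ref{item.converging}\Rightarrow\ref{item.decreasing}$, and the one genuinely hard arrow $\ref{item.decreasing}\Rightarrow\ref{item.limsup}$. I would start with the routine sequential directions. For $\ref{item.limsup}\Rightarrow\ref{item.decreasing}$, given $u$ with $f(x)=\limsup_t u(x_0,\dots,x_t)$, set $g_n(x)=\sup_{t\ge n}u(x_0,\dots,x_t)$. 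Each map $x\mapsto u(x_0,\dots,x_t)$ depends on finitely many coordinates, hence is continuous, so $g_n$ is a supremum of continuous functions and therefore lower semicontinuous; the $g_n$ are non-increasing in $n$, and $\lim_n g_n(x)=\inf_n\sup_{t\ge n}u(x_0,\dots,x_t)=\limsup_t u(x_0,\dots,x_t)=f(x)$. For $\ref{item.converging}\Rightarrow\ref{item.decreasing}$, from any lower semicontinuous $g_n\to f$ I pass to $h_N=\sup_{n\ge N}g_n$, which is lower semicontinuous (a supremum of lower semicontinuous functions), non-increasing in $N$, and satisfies $\lim_N h_N=\limsup_n g_n=f$ since $g_n\to f$.

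For the topological block I would exploit that $\mathop{subgr}(f)$ is downward closed in the real coordinate. For $\ref{item.decreasing}\Rightarrow\ref{item.subgraph}$, since $f=\inf_n g_n$ we have $f(x)\ge r$ iff $g_n(x)\ge r$ for all $n$, so $\mathop{subgr}(f)=\bigcap_n\mathop{subgr}(g_n)$; each $\mathop{subgr}(g_n)=\bigcap_m\{(x,r):g_n(x)-r>-1/m\}$ is $G_\delta$ because $(x,r)\mapsto g_n(x)-r$ is lower semicontinuous on $X\times\mathbb{R}$, and a countable intersection of $G_\delta$ sets is $G_\delta$. The arrow $\ref{item.subgraph}\Rightarrow\ref{item.contoursets}$ is slicing: $\{f\ge r\}$ is the preimage of $\mathop{subgr}(f)$ under the continuous map $x\mapsto(x,r)$. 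For $\ref{item.contoursets}\Rightarrow\ref{item.subgraph}$ I would use $f(x)\ge r$ iff $f(x)\ge q$ for every rational $q<r$ to write $\mathop{subgr}(f)=\bigcap_{q\in\mathbb{Q}}\big[(\{f\ge q\}\times\mathbb{R})\cup(X\times(-\infty,q])\big]$; each bracket is the union of a $G_\delta$ set and a closed set, hence $G_\delta$ (in a metrizable space closed sets are $G_\delta$ and finite unions of $G_\delta$ sets are $G_\delta$), so the intersection is $G_\delta$. Finally, to return from the topological to the sequential side I would prove $\ref{item.subgraph}\Rightarrow\ref{item.decreasing}$: writing $\mathop{subgr}(f)=\bigcap_m U_m$ with $U_m$ open and replacing $U_m$ by its downward $r$-saturation $\{(x,r):\exists r'\ge r,\ (x,r')\in U_m\}$ (still open, and not changing the intersection by downward closedness), I may assume $U_m=\{(x,r):r<\gamma_m(x)\}$ for lower semicontinuous $\gamma_m\colon X\to(-\infty,+\infty]$; then $\gamma_m\downarrow f$, and $\min_{j\le m}\gamma_j$ yields the required non-increasing sequence.

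Everything thus reduces to the limsup representation $\ref{item.decreasing}\Rightarrow\ref{item.limsup}$, which I expect to be the core of the argument. Starting from non-increasing lower semicontinuous $g_n\downarrow f$, I would work with the canonical approximations $h_n(s)=\inf_{x\in O(s)}g_n(x)$, which are non-decreasing along each branch, non-increasing in $n$, and satisfy $h_n(x_0,\dots,x_{t-1})\uparrow g_n(x)$ as $t\to\infty$. The aim is to define $u$ so that, along every branch, $\limsup_t u(x_0,\dots,x_t)=\inf_n g_n(x)=f(x)$. The natural scheme is to ``serve'' the levels $n=0,1,2,\dots$ in turn, emitting the current value $h_n$ and advancing the level as the branch lengthens. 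The upper bound $\limsup_t u\le f$ is then the easy half: because the emitted value never exceeds $g_n(x)$ at the current level and the level is driven to infinity, $\limsup_t u\le\lim_n g_n=f$.

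The hard part will be the matching lower bound, and the main obstacle is the non-uniform rate at which $h_n(x_0,\dots,x_{t-1})$ rises to $g_n(x)$: since $g_n$ is only lower semicontinuous and $X$ need not be compact, this convergence is not uniform in $x$, so no branch-independent schedule of level advancements can guarantee that every level is well approximated along every branch at once. To circumvent this I would make the advancement rule depend on the node itself — on the observed values of the $h_n$ — dwelling at a level until its approximation has risen, and I would emit deliberately very negative ``dummy'' values at the non-selected nodes so that only the genuinely chosen outputs influence the $\limsup$. The goal of the bookkeeping is to arrange that every rational $q<f(x)$ is emitted infinitely often while every rational $q>f(x)$ is emitted only finitely often, which forces $\limsup_t u=f(x)$. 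The delicate point I expect to demand the most care is precisely the control of the upper side within this adaptive scheme: one must prevent a low level whose limit $g_n(x)$ greatly exceeds $f(x)$ from repeatedly crossing high thresholds and thereby emitting spuriously large values infinitely often, which is exactly the failure mode that any naive schedule runs into and that the content-dependent design must be engineered to exclude.
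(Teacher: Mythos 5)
Your routine arrows are fine (\ref{item.limsup}$\Rightarrow$\ref{item.decreasing} via $g_n(x)=\sup_{t\ge n}u(x_0,\dots,x_t)$, \ref{item.converging}$\Leftrightarrow$\ref{item.decreasing}, \ref{item.decreasing}$\Rightarrow$\ref{item.subgraph}$\Leftrightarrow$\ref{item.contoursets}), but your proposal has two genuine gaps. The first is the saturation step in \ref{item.subgraph}$\Rightarrow$\ref{item.decreasing}: the downward $r$-saturation $\tilde U_m=\{(x,r):\exists r'\ge r,\ (x,r')\in U_m\}$ is indeed open, but your claim that it does not change the intersection is false. Take $T$ with a single branch, so $X=\{x\}$ and $X\times\mathbb{R}\cong\mathbb{R}$, let $f(x)=0$, and let $U_m$ have section $(-\infty,1/m)\cup(m,m+1)$. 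Then $\bigcap_m U_m$ has section $(-\infty,0]=\mathop{subgr}(f)$, yet the saturations have sections $(-\infty,m+1)$, whose intersection $(-\infty,2)$ strictly contains the subgraph. Passing first to the decreasing sets $\bigcap_{j\le m}U_j$ does not repair this: with sections $(-\infty,1/m)\cup(2,2+1/m)$ the intersection is still $(-\infty,0]$ while the saturations intersect to $(-\infty,2]$. The structural defect is that an open cover of a downward-closed set may carry ``spikes'' above the graph that shrink in width but not in height, and saturating fills in everything below them. This return arrow is exactly the classical Hausdorff equivalence, which the paper deliberately cites rather than reproves (it only proves \ref{item.limsup}$\Rightarrow$\ref{item.converging}$\Rightarrow$\ref{item.decreasing}$\Rightarrow$\ref{item.limsup}); since your scheme relies on \ref{item.subgraph}$\Rightarrow$\ref{item.decreasing} to close the cycle, this is a real hole, though repairable by the known (more involved) interleaved rational-level construction, or by citation as in the paper.

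The second gap is that \ref{item.decreasing}$\Rightarrow$\ref{item.limsup} --- the one arrow the paper proves in full detail, and the mathematical core of the theorem --- remains a plan rather than a proof: you correctly diagnose the obstruction (non-uniform rise of $\inf_{y\in O(s)}g_n(y)$ to $g_n(x)$ along a branch, and the danger of a low level whose limit $g_n(x)$ greatly exceeds $f(x)$ emitting large values infinitely often), but you never exhibit a mechanism that excludes it, and you say so yourself. The paper's construction supplies two concrete devices your sketch lacks. First, it discretizes: WLOG each $g_n$ takes values in $2^{-n}\mathbb{Z}$ (replace $g_n$ by the least grid point above it; this preserves lower semicontinuity and the non-increasing pointwise convergence). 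Second, it sets $u(s)=\sup R(s)$, where $R(s)$ is the union of $R_*(s)$ (those $r$ with $O(s)\subseteq\bigcap_n\{g_n>r\}$) and the sets $R_n(s)$ (those $r$ with $O(s)\subseteq\{g_n>r\}$ such that $s$ is \emph{minimal} with this property), with $u(s)=-\mathrm{length}(s)$ when $R(s)=\emptyset$ --- this last matching your ``very negative dummy values.'' The minimality clause means a fixed open set $\{g_n>r\}$ can be credited at most once along any branch, and the discretization ensures that for fixed $n$ only finitely many distinct sets $\{g_n>r\}$ occur with $r$ in the bounded window $[r,g_0(x)]$; together these kill precisely the failure mode you name: if a single bounded level contributed thresholds above $r$ infinitely often along a branch, two of the credited sets $\{g_{n_0}>r_k\}$ would coincide, contradicting minimality. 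Without an argument playing this role, your adaptive ``serve the levels, dwell until the approximation rises'' scheme does not yet establish $\limsup_t u(s_t)\le f(x)$, so the implication is unproven as it stands.
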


We remark that the functions satisfying condition \ref{item.contoursets} are sometimes  called semi-Borel class 2 (see Kiss \cite{Kiss[2017]}) or upper semi-Baire class 1 functions. The equivalence of the conditions \ref{item.converging}, \ref{item.decreasing}, \ref{item.subgraph}, \ref{item.contoursets} is in fact well known (see Hausdorff \cite{Hausdorff[2005]}). Below we prove the equivalence of conditions \ref{item.limsup}, \ref{item.converging}, and \ref{item.decreasing}.\bigskip

\noindent\textsc{Proof that \ref{item.limsup} implies \ref{item.converging}:} Let $f$ be a limsup function, and let $u$ be a function as in \eqref{eqn.limsup}. For $n \in \mathbb{N}$ let $g_{n}(x) = \sup\{u(x_{0},\dots,x_{t}):t \geq n\}$. $\Box$\bigskip

\noindent\textsc{Proof that \ref{item.converging} implies \ref{item.decreasing}:} Let $g_{n}$ be a sequence of lower semicontinuous functions converging pointwise to $f$. Define $g_{n}'(x) = \sup\{g_{m}(x):m \geq n\}$. This gives a non-increasing sequence of lower semicontinuous functions converging pointwise to $f$. $\Box$\bigskip

\noindent\textsc{Proof that \ref{item.decreasing} implies \ref{item.limsup}:} Consider a non-increasing sequence $g_{0} \geq g_{1} \geq \cdots$ of lower semicontinuous functions converging pointwise to $f$. We will also assume that for each $n \in \mathbb{N}$, the range of $g_{n}$ contains only reals of the form $z2^{-n}$ for $z \in \mathbb{Z}$. To see that this could be imposed without loss of generality, consider the functions $g_{n}'(x) = \min\{z2^{-n}: z \in \mathbb{Z}, g_{n}(x) \leq z2^{-n}\}$. Then $\{g_{n}'> z2^{-n}\}$ is the same as the set $\{g_{n} > z2^{-n}\}$, implying that $g_{n}'$ is lower semicontinuous. It is easy to see that $g_{0}'\geq g_{1}' \geq \cdots$ is a non-increasing sequence, and that it converges pointwise to $f$.

We define the function $u : T \rightarrow \mathbb{R}$. For $n \in \mathbb{N}$ and $r \in \mathbb{R}$ note that the set $\{g_n> r\}$ is an open set, because $g_{n}$ is assumed to be lower semicontinuous. Take a sequence $s \in T$. Define $R_{*}(s)$ to be the set of real numbers $r \in \mathbb{R}$ such that $O(s) \subseteq \bigcap_{n \in \mathbb{N}}\{g_n> r\}$. For $n \in \mathbb{N}$ define $R_{n}(s)$ to be the set of real numbers $r \in \mathbb{R}$ such that $O(s) \subseteq \{g_n>r\}$, and such that for no proper initial segment $s'$ of $s$ does it holds that $O(s')  \subseteq \{g_n>r\}$. (We remark that $R_{*}(s)$ is a half-line and the sets $R_{n}(s)$ are intervals.) Let $R(s)$ be the union of the sets $R_{*}(s),R_{0}(s),R_{1}(s),\dots$. Notice that the set $R(s)$ is bounded above by $\inf\{g_{0}(y):y \in O(s)\}$. If $R(s)$ is non-empty, we define $u(s) = \sup R(s)$. If $R(s)$ is empty, we let $u(s) = -{\rm length}(s)$.

We show that $u$ satisfies \eqref{eqn.limsup}. Thus fix an $x \in X$. We write $s_{t}$ to denote $(x_{0},\dots,x_{t})$ and let $\alpha = \limsup_{t \rightarrow \infty}u(s_{t})$. We must show that $f(x) = \alpha$.

We first show that $f(x) \leq \alpha$.

Take a real number $r$ with $r < f(x)$. We argue that $r \leq \alpha$.

For every $n \in \mathbb{N}$ it holds that $r < g_{n}(x)$, so $x \in \{g_n> r\}$. Let $t_{n}$ be the smallest $t \in \mathbb{N}$ for which $O(s_{t_{n}}) \subseteq \{g_n>r\}$. We distinguish two cases, depending on whether the sequence $t_{0}, t_{1},\dots$ is bounded or not. Suppose first the sequence $t_{0}, t_{1},\dots$ is unbounded. By the choice of $t_{n}$, we have $r \in R_{n}(s_{t_{n}})$, and hence $r \leq u(s_{t_{n}})$. We obtain $r \leq \alpha$, as desired. Suppose now that the sequence $t_{0}, t_{1},\dots$ is bounded, say $t_{n} \leq t$ for each $n \in \mathbb{N}$. Then $O(s_{t}) \subseteq \bigcap_{n \in \mathbb{N}}\{g_n>r\}$. Since for $k \geq t$ the cylinder $O(s_{k})$ is contained in $O(s_{t})$, we have $r \in R_{*}(s_{k})$, and consequently $r \leq u(s_{k})$. We conclude that $r \leq \alpha$, as desired.

We now show that $\alpha \leq f(x)$.

We know that $-\infty < \alpha$. Take a real number $r < \alpha$. We now argue that $r \leq f(x)$.

There exists an increasing sequence $t_{0} < t_{1} < \cdots$ such that $r < u(s_{t_{k}})$. By discarding finitely many elements of the sequence, we may assume that $-t_{0} < r$. The definition of $u$ now implies that the set $R(s_{t_{k}})$ is not empty for each $k \in \mathbb{N}$, and hence we can take an $r_{k} \in R(s_{t_{k}})$ such that $r < r_{k}$.

Suppose first there exists some $k \in \mathbb{N}$ for which $r_{k} \in R_{*}(s_{t_{k}})$. In that case, $x \in O(s_{t_{k}}) \subseteq \bigcap_{n \in \mathbb{N}}\{g_n>r_{k}\}$. It follows that $r < r_{k} < g_{n}(x)$ for each $n \in \mathbb{N}$ and consequently that $r \leq f(x)$.

Otherwise, for each $k \in \mathbb{N}$ choose an $n_{k} \in \mathbb{N}$ such that $r_{k} \in R_{n_{k}}(s_{t_{k}})$. We have $x \in O(s_{t_{k}}) \subseteq \{g_{n_{k}}>r_{k}\}$ and hence $r < r_{k} < g_{n_{k}}(x)$. It is therefore enough to show that the sequence $n_{0},n_{1},\dots,$ is unbounded: for then the numbers $g_{n_{0}}(x), g_{n_{1}}(x), \dots$ form a sequence converging to $f(x)$, and we are able to conclude that $r \leq f(x)$.

We argue that the sequence $n_{0},n_{1},\dots,$ is unbounded. Suppose to the contrary. By passing to a subsequence, we can then assume that $n_{0} = n_{1} = \cdots$. Now the sequence $r_{0},r_{1},\dots$ is bounded, because $r < r_{k} \leq \inf\{g_{0}(y):y \in O(s_{t_{k}})\} \leq g_{0}(x)$, for each $k \in \mathbb{N}$. Since only finitely many points in the range of $g_{n_{0}}$ fall in the interval $[r,g_{0}(x)]$, only finitely many of the sets $\{\{g_{n_{0}}>r_{k}\}:k \in \mathbb{N}\}$ are distinct. Thus, at least two of these sets are the same, say $\{g_{n_{0}}>r_{0}\} = \{g_{n_{0}}>r_{1}\}$. But $s_{t_{1}}$ is a minimal sequence satisfying $O(s_{t_{1}}) \subseteq \{g_{n_{0}}>r_{1}\}$, while $s_{t_{0}}$ is a proper initial segment of $s_{t_{1}}$ satisfying $O(s_{t_{0}}) \subseteq \{g_{n_{0}} > r_{0}\}$, contradicting $r_1 \in R_{n_1}(s_{t_1})$. Therefore the proof is complete. $\Box$\bigskip

We conclude this section with a list of some properties of the limsup functions that follow easily from the above characterization.
\begin{cor}\label{thm.sum}
The sum, the minimum, and the maximum of two limsup functions is a limsup function. \end{cor}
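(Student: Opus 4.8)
The plan is to route the entire argument through the equivalence of conditions \ref{item.limsup} and \ref{item.converging} in Theorem \ref{thm.charlimsup}: a function is a limsup function exactly when it is a pointwise limit of lower semicontinuous functions. This reduces the corollary to two routine observations, namely that the class of lower semicontinuous functions is closed under addition, binary minimum, and binary maximum, and that each of these three operations commutes with pointwise limits.

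Concretely, let $f$ and $g$ be limsup functions. Using \ref{item.limsup} $\Rightarrow$ \ref{item.converging}, I would fix sequences $f_{0},f_{1},\dots$ and $g_{0},g_{1},\dots$ of lower semicontinuous functions converging pointwise to $f$ and $g$ respectively (assuming, harmlessly, that they are real-valued, so that sums are unambiguously defined). For the sum I would take the sequence $h_{n}=f_{n}+g_{n}$, for the minimum the sequence $h_{n}=\min(f_{n},g_{n})$, and for the maximum the sequence $h_{n}=\max(f_{n},g_{n})$. Lower semicontinuity of each $h_{n}$ is immediate from the open-set descriptions $\{f_{n}+g_{n}>r\}=\bigcup_{q\in\mathbb{Q}}(\{f_{n}>q\}\cap\{g_{n}>r-q\})$, $\{\min(f_{n},g_{n})>r\}=\{f_{n}>r\}\cap\{g_{n}>r\}$, and $\{\max(f_{n},g_{n})>r\}=\{f_{n}>r\}\cup\{g_{n}>r\}$. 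Since addition, minimum, and maximum are continuous as maps $\mathbb{R}^{2}\to\mathbb{R}$, the sequence $h_{n}$ converges pointwise to $f+g$, $\min(f,g)$, and $\max(f,g)$ respectively. Invoking \ref{item.converging} $\Rightarrow$ \ref{item.limsup} then yields that each of the three combinations is a limsup function.

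I do not expect a genuine obstacle: the only points demanding care are the standard facts that the sum, minimum, and maximum of two lower semicontinuous functions are again lower semicontinuous, and that these binary operations pass to pointwise limits. It is, however, worth recording why one should argue through \ref{item.converging} rather than manipulate the limsup witnesses $u,w$ from \eqref{eqn.limsup} directly. For the maximum the naive approach does work: $\max(u,w)$ witnesses $\max(f,g)$, because $\limsup\max(a_{t},b_{t})=\max(\limsup a_{t},\limsup b_{t})$ for any two sequences. But the analogous identities $\limsup(a_{t}+b_{t})=\limsup a_{t}+\limsup b_{t}$ and $\limsup\min(a_{t},b_{t})=\min(\limsup a_{t},\limsup b_{t})$ both fail in general, so the candidate witnesses $u+w$ and $\min(u,w)$ need not compute $f+g$ and $\min(f,g)$. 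The pointwise-limit characterization avoids this difficulty altogether, which is precisely why it is the right tool for all three operations at once.
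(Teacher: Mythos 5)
Your proof is correct and matches the paper's approach: the paper states this corollary without a separate argument, remarking only that it ``follows easily from the above characterization,'' and your derivation via the equivalence \ref{item.limsup} $\Leftrightarrow$ \ref{item.converging} of Theorem \ref{thm.charlimsup} --- closure of lower semicontinuous functions under $+$, $\min$, $\max$, together with continuity of these operations on $\mathbb{R}^2$ to pass to pointwise limits --- is precisely that intended easy derivation, carried out correctly. Your closing remark is also accurate: $\max(u,w)$ does witness $\max(f,g)$ directly since $\limsup\max(a_t,b_t)=\max(\limsup a_t,\limsup b_t)$, while the corresponding identities for sums and minima fail, which is why the characterization-based route is the right one for all three operations.
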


We say that a collection $\mathcal{C}$ of real-valued functions on $X$ is \textit{closed under pointwise limits from above} if for each sequence $f_{0} \geq f_{1} \geq \cdots$ of functions in $\mathcal{C}$ converging pointwise to a function $f$, the function $f$ is an element of $\mathcal{C}$.

\begin{cor}\label{thm.pointwise}
The set of limsup functions is the smallest collection of functions that (a) contains all lower semicontinuous functions and (b) is closed under pointwise limits from above.
\end{cor}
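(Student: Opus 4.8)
The plan is to write $\mathcal{L}$ for the collection of all limsup functions and to prove the two halves of the ``smallest collection'' claim separately: that $\mathcal{L}$ itself has properties (a) and (b), and that every collection $\mathcal{C}$ having (a) and (b) contains $\mathcal{L}$. Both halves reduce to the equivalence of conditions \ref{item.limsup} and \ref{item.decreasing} in Theorem \ref{thm.charlimsup}, namely that a function is a limsup function exactly when it is the pointwise limit of a non-increasing sequence of lower semicontinuous functions.

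Property (a) for $\mathcal{L}$ and the minimality half are both quick. For (a), a lower semicontinuous $g$ is the pointwise limit of the constant, hence non-increasing, sequence $g \ge g \ge \cdots$, so $g \in \mathcal{L}$ by the implication \ref{item.decreasing} $\Rightarrow$ \ref{item.limsup}. For minimality, suppose $\mathcal{C}$ satisfies (a) and (b) and take $f \in \mathcal{L}$; by \ref{item.limsup} $\Rightarrow$ \ref{item.decreasing} there is a non-increasing sequence $g_0 \ge g_1 \ge \cdots$ of lower semicontinuous functions with pointwise limit $f$. Here I would pause to record that these approximants are genuinely real-valued: a finite $\limsup_t u(x_0,\dots,x_t)$ forces the values of $u$ along the branch to be bounded above, so the suprema defining the $g_n$ in the proof of Theorem \ref{thm.charlimsup} are finite. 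Consequently each $g_n$ lies in $\mathcal{C}$ by (a), and applying (b) to the sequence $(g_n)$ gives $f \in \mathcal{C}$, so $\mathcal{L} \subseteq \mathcal{C}$.

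The one substantive point is that $\mathcal{L}$ is closed under pointwise limits from above. So let $f_0 \ge f_1 \ge \cdots$ be limsup functions converging pointwise to $f$, and for each $n$ fix, via \ref{item.limsup} $\Rightarrow$ \ref{item.decreasing}, a non-increasing sequence $g_{n,0} \ge g_{n,1} \ge \cdots$ of lower semicontinuous functions converging pointwise to $f_n$. I would diagonalize by setting $h_k = \min_{n \le k} g_{n,k}$, each $h_k$ being lower semicontinuous as a minimum of finitely many lower semicontinuous functions. The first check is that $(h_k)$ is non-increasing: for $n \le k$ one has $g_{n,k+1} \le g_{n,k}$, so $h_{k+1} \le \min_{n \le k} g_{n,k+1} \le h_k$. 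The second check is that $h_k \to f$ pointwise at a fixed $x$. The upper estimate follows from $h_k(x) \le g_{n,k}(x)$ for $k \ge n$: letting $k \to \infty$ gives $\lim_k h_k(x) \le f_n(x)$, and then $n \to \infty$ gives $\lim_k h_k(x) \le f(x)$. The lower estimate is the part I expect to be essentially free rather than the obstacle: since $g_{n,m} \ge f_n \ge f$ pointwise for all $n,m$, every $h_k \ge f$, whence $\lim_k h_k(x) \ge f(x)$. Together these give $h_k \to f$, and \ref{item.decreasing} $\Rightarrow$ \ref{item.limsup} concludes $f \in \mathcal{L}$.

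I anticipate that the only delicate bookkeeping is the monotonicity of the diagonal sequence $(h_k)$ under its double indexing, together with the observation that every lower semicontinuous approximant in play is real-valued, so that it is a legitimate member of a collection of real-valued functions. There is no genuine obstruction: the lower bound in the convergence argument is automatic because each approximant dominates $f$, and everything else is a direct appeal to Theorem \ref{thm.charlimsup}.
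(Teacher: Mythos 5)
Your proposal is correct and takes essentially the paper's intended route: the paper states this corollary as following easily from Theorem \ref{thm.charlimsup}, and you derive both halves from the equivalence \ref{item.limsup}$\Leftrightarrow$\ref{item.decreasing}, with the standard diagonal $h_k = \min_{n \le k} g_{n,k}$ (correctly checked to be lower semicontinuous, non-increasing, and convergent to $f$, the lower bound being automatic since each $g_{n,m} \ge f_n \ge f$) supplying closure under pointwise limits from above. A marginally quicker alternative for that one step is condition \ref{item.contoursets}: since $f_n \downarrow f$ pointwise gives $\{f \ge r\} = \bigcap_{n} \{f_n \ge r\}$, a countable intersection of $\mathbf{\Pi}_{2}^{0}$ sets is $\mathbf{\Pi}_{2}^{0}$, but your argument is equally valid.
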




\begin{cor}\label{thm.uniform}
A uniform limit of limsup functions is a limsup function.
\end{cor}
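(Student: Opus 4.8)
The plan is to reduce the statement to two closure properties that are already available: closure under finite minima, from Corollary~\ref{thm.sum}, and closure under non-increasing pointwise limits, which is the content of Corollary~\ref{thm.pointwise}. The catch is that a uniform limit is not a monotone limit, so I cannot feed the sequence $(f_n)$ directly into Corollary~\ref{thm.pointwise}. The real work is therefore to manufacture, out of a uniformly convergent sequence, a \emph{non-increasing} sequence of limsup functions that still converges to $f$.

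Concretely, let $f_n \to f$ uniformly with each $f_n$ a limsup function, and set $\epsilon_n = \sup_{x \in X}|f_n(x)-f(x)|$; after discarding finitely many terms I may assume every $\epsilon_n$ is finite and $\epsilon_n \to 0$. First I would lift each approximant above $f$ by putting $F_n = f_n + \epsilon_n$. A constant function is lower semicontinuous, hence a limsup function, and limsup functions are closed under addition by Corollary~\ref{thm.sum}, so each $F_n$ is again a limsup function. By construction $F_n(x) \ge f(x)$ for every $x$ (since $f_n \ge f - \epsilon_n$), while $|F_n - f| \le 2\epsilon_n \to 0$, so $F_n \to f$ pointwise. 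Next I would pass to running minima $G_n = \min(F_0,\dots,F_n)$: iterating the closure under minima from Corollary~\ref{thm.sum} shows each $G_n$ is a limsup function, and $G_{n+1} = \min(G_n, F_{n+1}) \le G_n$, so $(G_n)$ is non-increasing. The squeeze $f \le G_n \le F_n$, valid because every $F_i$ dominates $f$ while $G_n \le F_n$, combined with $F_n \to f$, forces $G_n \to f$ pointwise. Thus $(G_n)$ is a non-increasing sequence of limsup functions converging pointwise to $f$, and Corollary~\ref{thm.pointwise} immediately gives that $f$ is a limsup function.

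I expect the only genuinely delicate point to be this monotonization: recognizing that shifting each $f_n$ up by its error $\epsilon_n$ and then taking running minima converts mere uniform convergence into a bona fide limit from above without ever leaving the class of limsup functions. Everything else is a bookkeeping application of the already-proved corollaries. I would deliberately avoid attacking the problem through the level-set description~\ref{item.contoursets}, since a direct computation of $\{f \ge r\}$ from the sets $\{f_n \ge s\}$ under only uniform convergence tends to overshoot the $\mathbf{\Pi}^0_2$ complexity, whereas the route through Corollary~\ref{thm.sum} and Corollary~\ref{thm.pointwise} stays entirely within the class.
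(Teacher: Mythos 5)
Your proof is correct, and since the paper states Corollary~\ref{thm.uniform} without proof (only remarking that it follows easily from Theorem~\ref{thm.charlimsup}), your argument is a genuine alternative to what the authors presumably intended. Your monotonization is sound: $F_n = f_n + \epsilon_n$ dominates $f$ and is a limsup function by Corollary~\ref{thm.sum} (constants being continuous, hence lower semicontinuous, hence limsup), the running minima $G_n = \min(F_0,\dots,F_n)$ form a non-increasing sequence of limsup functions squeezed between $f$ and $F_n$, and Corollary~\ref{thm.pointwise} legitimately supplies closure under pointwise limits from above, since for the limsup class to be the \emph{smallest} collection with properties (a) and (b) it must itself enjoy both; no circularity arises as both cited corollaries precede this one. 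The one point where your strategic commentary goes astray is the final remark: the level-set route via~\ref{item.contoursets} does \emph{not} overshoot $\mathbf{\Pi}^0_2$, because $\mathbf{\Pi}^0_2$ is closed under \emph{countable intersections} (only unions would be dangerous), and uniform convergence produces exactly an intersection. Indeed, passing to a subsequence with $\epsilon_{n_k} \le 2^{-k}$ one checks directly that $\{f \ge r\} = \bigcap_{k} \{f_{n_k} \ge r - 2^{-k+1}\}$: the inclusion $\subseteq$ is immediate from $|f_{n_k}-f|\le 2^{-k}$, while membership in the intersection gives $f(x) \ge r - 3\cdot 2^{-k}$ for every $k$, hence $f(x)\ge r$. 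This is almost certainly the ``easy'' argument the authors had in mind, and it is shorter than yours; what your route buys instead is that it never leaves the level of function classes, and it isolates a reusable fact, namely that uniform convergence can be upgraded to monotone convergence from above within any class closed under adding constants and under finite minima.
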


\begin{cor}\label{thm.Baire1}
A function $f$ is of Baire class 1 if and only if both $f$ and $-f$ are limsup functions.
\end{cor}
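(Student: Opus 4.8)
The plan is to combine the contour-set characterization of limsup functions from Theorem~\ref{thm.charlimsup} with the classical Lebesgue--Hausdorff description of Baire class~1 functions. Recall that by the equivalence of \ref{item.limsup} and \ref{item.contoursets}, a function is a limsup function exactly when all of its superlevel sets $\{g \ge r\}$ are $\mathbf{\Pi}_2^0$ (that is, $G_\delta$). Applying this to both $f$ and $-f$, and using $\{-f \ge r\} = \{f \le -r\}$, I would first restate the hypothesis ``both $f$ and $-f$ are limsup functions'' in the symmetric form: for every $r \in \mathbb{R}$, both $\{f \ge r\}$ and $\{f \le r\}$ are $G_\delta$ subsets of $X$.

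For the forward implication, suppose $f$ is of Baire class~1. The Lebesgue--Hausdorff theorem (valid since $X$ is metrizable and the target is $\mathbb{R}$) says that preimages of open sets under $f$ are $F_\sigma$, equivalently preimages of closed sets are $G_\delta$. Since $[r,\infty)$ and $(-\infty, r]$ are closed, both $\{f \ge r\}$ and $\{f \le r\}$ are $G_\delta$, which by the reformulation above means both $f$ and $-f$ are limsup functions.

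For the converse, assume both the superlevel and the sublevel sets are $G_\delta$ for every $r$. Passing to complements, $\{f > r\} = X \setminus \{f \le r\}$ and $\{f < r\} = X \setminus \{f \ge r\}$ are then $F_\sigma$. For rationals $a < b$ I would write $f^{-1}((a,b)) = \{f > a\} \cap \{f < b\}$ and use that the class $F_\sigma$ is closed under finite intersections (writing each factor as a countable union of closed sets and intersecting termwise) to conclude this preimage is $F_\sigma$; since an arbitrary open $U \subseteq \mathbb{R}$ is a countable union of such intervals and $F_\sigma$ is closed under countable unions, $f^{-1}(U)$ is $F_\sigma$ for every open $U$. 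By Lebesgue--Hausdorff again, $f$ is of Baire class~1.

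The argument is essentially bookkeeping once the two characterizations are in hand, so I do not expect a genuine obstacle; the only points requiring a little care are the passage from the one-sided contour-set condition to the two-sided statement via $-f$, and the elementary stability of the $F_\sigma$ (and $G_\delta$) classes under the relevant Boolean operations. One should also confirm that the version of the Lebesgue--Hausdorff characterization invoked applies to the zero-dimensional Polish space $X$ at hand, which it does.
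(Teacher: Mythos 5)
Your proof is correct and follows exactly the route the paper intends: the paper states this corollary without proof as an easy consequence of Theorem~\ref{thm.charlimsup}, and the natural derivation is precisely yours, namely translating ``$f$ and $-f$ are limsup'' via condition \ref{item.contoursets} into all sets $\{f \ge r\}$ and $\{f \le r\}$ being $\mathbf{\Pi}^0_2$, and then invoking the classical Lebesgue--Hausdorff characterization of Baire class~1 real-valued functions as the $\mathbf{\Sigma}^0_2$-measurable ones. Your bookkeeping with complements and countable unions of rational intervals is accurate, and your check that the theorem applies to the Polish space $X$ is the right point of care.
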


\section{A game for limsup functions}\label{secn.gamelimsup}
In this section we turn to the analysis of the game $\Gamma(f)$. Let us begin with the following observation.

\begin{lemma}\label{l:player II wins iff f is limsup}
Player II has a winning strategy in $\Gamma(f)$ precisely when $f$ is a limsup function. 
\end{lemma}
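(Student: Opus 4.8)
The plan is to establish the two directions separately, leveraging the characterization of limsup functions from Theorem \ref{thm.charlimsup}, in particular the equivalence of \ref{item.limsup} and the existence of a function $u$ satisfying \eqref{eqn.limsup}.

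For the direction asserting that if $f$ is a limsup function then Player II has a winning strategy, I would proceed as follows. Suppose $f$ is a limsup function, and fix a function $u : T \to \mathbb{R}$ witnessing \eqref{eqn.limsup}. I would then define a strategy for Player II that simply tracks the values of $u$ along the position played so far: after Player I has revealed $x_{0},\dots,x_{t}$, Player II responds with $v_{t} = u(x_{0},\dots,x_{t})$. Note this is legitimate, since at the moment Player II must choose $v_{t}$, he has observed exactly $x_{0},\dots,x_{t}$, which is precisely the argument of $u$. In any run $(x_{0},v_{0},x_{1},v_{1},\dots)$ consistent with this strategy, the resulting infinite branch is $x = (x_{0},x_{1},\dots) \in X$, and by construction $\limsup_{t} v_{t} = \limsup_{t} u(x_{0},\dots,x_{t}) = f(x)$ by \eqref{eqn.limsup}. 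Hence Player II wins every such run, so the strategy is winning.

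For the converse direction, suppose Player II has a winning strategy $\sigma$ in $\Gamma(f)$. The key observation is that because Player II moves only after observing the current finite position of Player I, the strategy $\sigma$ is itself essentially a function on $T$. More precisely, for each $s = (x_{0},\dots,x_{t}) \in T$ I would define $u(s)$ to be the move $v_{t}$ that $\sigma$ dictates when Player I has played the finite sequence $x_{0},\dots,x_{t}$; since $\sigma$ depends only on the observed history and Player II has observed exactly $s$ at this stage, this is well defined. Now take any $x = (x_{0},x_{1},\dots) \in X$. The run in which Player I plays $x_0, x_1, \dots$ and Player II follows $\sigma$ is consistent with $\sigma$, so Player II wins it; by the winning condition this means $f(x) = \limsup_{t} v_{t} = \limsup_{t} u(x_{0},\dots,x_{t})$. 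As $x$ was arbitrary, $u$ witnesses that $f$ is a limsup function.

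I do not expect either direction to pose a genuine obstacle, as the correspondence between a non-anticipating strategy for Player II and a function $u : T \to \mathbb{R}$ is essentially a bijection by design of the game. The only point requiring mild care is the bookkeeping of the \emph{timing} of moves — verifying that at the stage when Player II selects $v_{t}$ he has seen exactly $(x_{0},\dots,x_{t})$ and no more, so that identifying $\sigma$ with a function on $T$ is legitimate and no information about future moves is illicitly used. Once this synchronization is stated cleanly, both inclusions follow immediately from \eqref{eqn.limsup} and the winning condition $f(x) = \limsup v_{t}$.
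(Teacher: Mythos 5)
Your proposal is correct and follows essentially the same argument as the paper: both directions exploit the natural correspondence between Player II's strategies and functions $u \colon T \to \mathbb{R}$, defining $u(x_0,\dots,x_t)$ as the move $v_t$ dictated by the strategy along the play where Player I reveals $s$ (with Player II's earlier moves determined recursively by the strategy itself), and conversely reading off a strategy from a witnessing $u$. Your extra remark on the synchronization of moves is a fair point of care, but it matches what the paper's proof implicitly uses.
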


\begin{proof}
From any fixed strategy of Player II in $\Gamma(f)$, one can construct a function $u : T \to \mathbb{R}$ the following way: let Player I play the elements of $(x_0, \dots, x_t) \in T$ and let Player II follow that strategy. Then $u(x_0, \dots, x_t) = v_t$, where $v_0, \dots, v_t$ are the the moves of Player II determined by the strategy. It is easy to check that if a strategy is winning, then the corresponding function $u$ satisfies \eqref{eqn.limsup}, hence $f$ is a limsup function.

Conversely, if $f$ is a limsup function, then let $u$ be the function satisfying \eqref{eqn.limsup}. One can construct a strategy using $u$ by letting Player II respond $u(x_0, \dots, x_t)$ if Player I plays $(x_0, \dots, x_t)$. It is then straightforward to check that the strategy of Player II defined this way is winning.
\end{proof}

Unlike the eraser game (see Kiss  \cite{Kiss[2019]}), as we will show below, the game $\Gamma(f)$ need not be determined.


\smallskip
Recall that a set $B \subseteq X$ is called a \textit{Bernstein set} if neither $B$ nor $X \setminus B$ contains a non-empty perfect set, and also recall that every uncountable Polish space contains a Bernstein set. Moreover, every uncountable analytic set (in a Polish space) contains a non-empty perfect set.

\begin{thm}\label{thm.nondet}
If $X$ is uncountable and $B \subseteq X$ is a Bernstein set, then $\Gamma(1_{B})$ is not determined.
\end{thm}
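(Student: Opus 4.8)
The plan is to show that in $\Gamma(1_B)$ \emph{neither} player has a winning strategy. That Player II has none follows immediately from the machinery already in place: by Lemma~\ref{l:player II wins iff f is limsup} a winning strategy for Player II exists exactly when $1_B$ is a limsup function, and by Theorem~\ref{thm.charlimsup} this would force $\{1_B \ge 1/2\} = B$ to be a $\mathbf{\Pi}_2^0$, hence Borel, set. Then both $B$ and $X\setminus B$ would be analytic, and since $X$ is uncountable at least one of them is uncountable; an uncountable analytic set contains a non-empty perfect set, contradicting the Bernstein property. So $1_B$ is not a limsup function and Player II cannot win.

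The real work is to exclude a winning strategy $\sigma$ for Player I. First I would set up the natural response map: letting Player I follow $\sigma$ while Player II plays an arbitrary binary sequence $v \in \{0,1\}^{\mathbb N}$ produces a branch, and I define $\Phi(v) \in X$ to be that branch. Since Player I's move $x_{t+1}$ is determined by $\sigma$ from $v_0,\dots,v_t$, the map $\Phi : \{0,1\}^{\mathbb N} \to X$ is continuous. The key point is how the winning condition reads along such runs: if $v$ contains infinitely many $1$'s then $\limsup v_t = 1$, so $\sigma$ winning forces $1_B(\Phi(v)) \neq 1$, i.e.\ $\Phi(v) \in X\setminus B$; whereas any play that is eventually $0$ has $\limsup v_t = 0$, forcing $\Phi(v) \in B$.

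Now write $D$ for the ($G_\delta$) set of binary sequences with infinitely many $1$'s and $C$ for its complement, the countable set of eventually-$0$ sequences. By the above $\Phi(D) \subseteq X\setminus B$, and being a continuous image of a Borel set $\Phi(D)$ is analytic; were it uncountable it would contain a non-empty perfect subset of $X\setminus B$, contradicting the Bernstein property, so $\Phi(D)$ is countable. As $\Phi(C)$ is countable too, the whole image $\Phi(\{0,1\}^{\mathbb N})$ is countable. I would then invoke the Baire category theorem: the fibers $\Phi^{-1}(y)$ are closed and cover the Cantor space, so some fiber $\Phi^{-1}(y_0)$ has non-empty interior and hence contains a basic clopen set $[s]$. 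The two plays $s {}^\frown (0,0,\dots)$ and $s {}^\frown (1,1,\dots)$ both lie in $[s]$, so $\Phi$ sends both to the single branch $y_0$, yet their limsups are $0$ and $1$ respectively. Since $\sigma$ wins both runs we would obtain $1_B(y_0) \neq 0$ and $1_B(y_0) \neq 1$ at once, which is absurd; this contradiction shows Player I has no winning strategy, and the theorem follows.

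I expect the main obstacle to be the possibility that $\sigma$ couples the produced branch tightly to Player II's moves: if, say, $\Phi$ were injective, each reachable branch would come from a unique play with a single determined limsup, and no branch could be realized with two different limsups. The two ingredients that defuse this are exactly the Bernstein property together with the perfect set property for analytic sets, which collapse $\Phi(D)$ to a countable set, and then the Baire category argument, which extracts an entire cylinder $[s]$ on which $\Phi$ is constant---precisely the freedom Player II needs to reach one branch with limsup $0$ and with limsup $1$. Verifying continuity of $\Phi$ and the closedness of its fibers is routine, so the only delicate point is assembling these two classical facts in the right order.
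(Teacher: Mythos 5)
Your proof is correct, and while its first half is identical to the paper's argument, the endgame is genuinely different. Both proofs dispose of Player II the same way (Bernstein sets are not Borel, so $1_B$ is not a limsup function), both restrict Player II to binary moves to get the continuous response map $g = \Phi \colon 2^{\mathbb{N}} \to X$ induced by a putative winning strategy for Player I, and both use the perfect set property of analytic sets against the Bernstein property to conclude that $\Phi(E)$ is countable, where $E$ is the set of sequences with infinitely many $1$'s. The divergence is in how the contradiction is then extracted. The paper notes that $g(E) \subseteq X \setminus B$ and $g(2^{\mathbb{N}} \setminus E) \subseteq B$ force $E = g^{-1}(g(E))$, so countability of $g(E)$ would make $E$ a $\mathbf{\Sigma}_2^0$ subset of $2^{\mathbb{N}}$, contradicting the Baire-category fact that $E$ is not $\mathbf{\Sigma}_2^0$; you instead observe that the entire image of $\Phi$ is countable (here you additionally use that the eventually-$0$ sequences form a countable set, which the paper does not need), apply the Baire category theorem directly to the closed fibers of $\Phi$ to find a cylinder $[s]$ on which $\Phi$ is constant with value $y_0$, and then play $s^{\frown}(0,0,\dots)$ and $s^{\frown}(1,1,\dots)$ to realize the single branch $y_0$ with limsup $0$ and limsup $1$, giving $1_B(y_0) \neq 0$ and $1_B(y_0) \neq 1$ at once. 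Your route buys a more concrete, pointwise contradiction and avoids both the complexity computation for $E$ and the pullback identity $E = g^{-1}(g(E))$ (whose justification needs the disjointness of the two image sets); the paper's route keeps the fiber analysis out and localizes all category-theoretic input into the single standard fact that $E$ is not $F_\sigma$. Both are complete; all the steps you flag as routine (continuity of $\Phi$, closedness of fibers, analyticity of $\Phi(E)$) do check out.
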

\begin{proof}
Notice first that $B$ is not a Borel set: for if it were, either $B$ or $X \setminus B$ would contain a non-empty perfect set. And since $B$ is not Borel, the function $1_{B}$ is not a limsup function, and hence Player II has no winning strategy in $\Gamma(1_{B})$.

Suppose that Player I does.

Let $E = \{v \in 2^{\mathbb{N}}:v_{n} = 1\text{ for infinitely many }n\in\mathbb{N}\}$, where we write $2 = \{0,1\}$. Notice that $E$ is not a $\mathbf{\Sigma}_{2}^{0}$ subset of $2^{\mathbb{N}}$. For it were, we would be able to express $2^{\mathbb{N}}$ as a countable union of meagre sets, contradicting the Baire category theorem.

Now, consider the continuous function $g : 2^{\mathbb{N}} \to X$ induced by Player I's winning strategy. Then $g(E) \subseteq X \setminus B$ and $g(2^{\mathbb{N}} \setminus E) \subseteq B$. Since $g(E)$ is an analytic subset of $X$, it is either countable, or it contains a perfect subset. But $X \setminus B$ contains no perfect subset. Thus $g(E)$ is countable, hence a $\mathbf{\Sigma}_{2}^{0}$ subset of $X$. But then $E = g^{-1}(g(E))$ is a $\mathbf{\Sigma}_{2}^{0}$ subset of $2^{\mathbb{N}}$, yielding a contradiction.
\end{proof}


We now turn to a sufficient condition for Player I to have a winning strategy. This sufficient condition will also turn out to be necessary under various assumptions.

Recall that a set is called \textit{a Cantor set} if it is homeomorphic to the classical middle-thirds Cantor set.

\begin{thm}\label{thm.PlayerI}
Let $f : X \to \mathbb{R}$ be arbitrary and suppose that there is a number $r \in \mathbb{R}$ and a Cantor set $C \subseteq X$ such that, in the subspace topology of $C$, the set $C \cap \{f \ge r\}$ is meagre and dense. Then Player I has a winning strategy in $\Gamma(f)$.
\end{thm}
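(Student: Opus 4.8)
The plan is to hand Player I an explicit \emph{adaptive} strategy that steers the run $x$ into the Cantor set $C$ and exploits the meagre/dense structure of $D:=C\cap\{f\ge r\}$. Since $D$ is meagre in $C$, fix an increasing sequence $F_{0}\subseteq F_{1}\subseteq\cdots$ of closed nowhere dense subsets of $C$ with $D\subseteq\bigcup_{k}F_{k}$; since $D$ is also dense in $C$, every nonempty relatively open subset of $C$ meets $D$. Player I will guarantee that $x\in C\setminus D$ whenever $\limsup_{t}v_{t}\ge r$, and $x\in D$ whenever $\limsup_{t}v_{t}<r$. In the first case $f(x)<r\le\limsup_{t}v_{t}$, and in the second $f(x)\ge r>\limsup_{t}v_{t}$; either way $f(x)\neq\limsup_{t}v_{t}$, so Player I wins. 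The point is that landing in the comeagre set $C\setminus D$ is achievable by avoiding every $F_{k}$, while landing in the dense set $D$ is achievable by converging to a suitable point of $D$; the whole content of the proof is to decide \emph{adaptively}, using only Player II's observed moves, which of the two to do.

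To navigate I would fix a Cantor scheme $(s_{\sigma})_{\sigma\in 2^{<\mathbb{N}}}$ of nodes of $T$ realizing $C$, so that committing to longer and longer $s_{\sigma}$ forces the branch into prescribed clopen pieces of $C$; since only the tail of the play matters, Player I may spend as many moves as needed traversing each $s_{\sigma}$ and react to Player II between coordinates without any timing problem. Call Player II's move $v_{t}$ \emph{$k$-large} if $v_{t}>r-\tfrac{1}{k+1}$. Player I maintains a current level $k$ (initially $0$) and a current target $d\in D$ inside his current clopen piece of $C$, and by default extends his branch toward $d$. Whenever, while at level $k$, he observes a $k$-large move, he performs an \emph{avoidance step}: he shrinks his current clopen piece to a clopen $V$ disjoint from $F_{k}$ (possible since $F_{k}$ is nowhere dense and $C$ is zero-dimensional), chooses a new target $d\in D\cap V$ (possible since $D$ is dense), increments $k$, and resumes heading toward the new target.

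It then remains to verify the two cases. If $\limsup_{t}v_{t}\ge r$, then for every $k$ there are infinitely many $k$-large moves, so Player I advances through every level and hence performs an avoidance step for each $F_{k}$; thus $x$ eventually lies inside a clopen set disjoint from $F_{k}$ for every $k$, whence $x\notin\bigcup_{k}F_{k}\supseteq D$ and $x\in C\setminus D$. If instead $\limsup_{t}v_{t}<r$, pick $\delta>0$ and $k^{*}$ with $\tfrac{1}{k^{*}+1}\le\delta$ so that only finitely many moves are $k^{*}$-large; since a $k$-large move with $k\ge k^{*}$ is in particular $k^{*}$-large, and each advance past level $k^{*}$ is triggered by such a move at a strictly later time, Player I can advance only finitely often. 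He therefore settles on a final target $\bar d\in D$ and converges to it, giving $x=\bar d\in D$. This yields the required dichotomy and hence the theorem.

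The combinatorics above is straightforward; the main obstacle is the topological bookkeeping hidden in ``navigating within $C$''. One must produce a Cantor scheme inside $T$ whose branches are exactly $C$, check that relatively clopen subsets of $C$ correspond to finite unions of pieces of this scheme, and confirm that Player I can always force the branch into a prescribed nonempty clopen piece while remaining in $C$ and driving the diameters to $0$, so that the play converges to a single well-defined point of $C$ and each avoidance step permanently commits the branch away from $F_{k}$. One should also record that the boundary case $\limsup_{t}v_{t}=r$ is correctly absorbed into the first case, where $x\in C\setminus D$ gives $f(x)<r=\limsup_{t}v_{t}$ and hence a win.
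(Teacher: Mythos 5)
Your proposal is correct and takes essentially the same approach as the paper's proof: both cover $C\cap\{f\ge r\}$ by closed nowhere dense subsets of $C$, have Player I zoom in on a target point of that set, and switch targets (advancing a counter) each time Player II plays a move above the threshold $r-\varepsilon_k$, with the same final dichotomy according to whether $\limsup_t v_t \ge r$ (infinitely many switches, so the branch avoids every piece of the cover and lands in $C\setminus\{f\ge r\}$) or $\limsup_t v_t < r$ (finitely many switches, so the branch equals the final target and lands in $\{f\ge r\}$). The only difference is cosmetic: you commit the branch away from $F_k$ actively, by shrinking at the moment of the switch to a clopen subset of $C$ disjoint from $F_k$ (using that $C$ is zero-dimensional), whereas the paper achieves the same commitment passively by building the requirement $O(x_0,\dots,x_n)\cap S_{m_n}=\emptyset$ into the trigger condition and waiting for the shrinking cylinders around a target chosen outside $S_{m_n}$ to separate from that closed set --- two equivalent ways of arranging the same bookkeeping.
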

\begin{proof}
Let $Y = C \cap \{f \ge r\}$, and let $\{S_{0},S_{1},\dots\}$ be a cover of $Y$ by closed nowhere dense subsets of $C$. We presently construct a winning strategy for Player I.

Fix some sequence $v_{0},v_{1},\dots$ of Player II's moves.

Let $y(0)$ be any point of the set $Y \setminus S_{0}$. Notice that the set $C \setminus S_{0}$ is not empty because $S_{0}$ is nowhere dense in $C$, and $Y \setminus S_{0}$ is not empty since $Y$ is dense in $C$. Set $m_{0} = 0$.

Player I starts with a move $x_{0} = y(0)_{0}$. Take an $n \in \mathbb{N}$ and suppose that Player I's moves $x_{0}, \dots, x_{n}$ have been defined, along with a point $y(n) \in Y$ and a number $m_{n} \in \mathbb{N}$, such that
\begin{equation}\label{eqn.ind}
(x_{0},\dots,x_{n}) = (y(n)_0,\dots,y(n)_n).
\end{equation}
To define the next move of Player I, $x_{n+1}$, we distinguish two cases:

Case 1: $v_{n} > r - 2^{-m_{n}}$ and $O(x_{0},\dots,x_{n}) \cap S_{m_{n}} = \emptyset$. Let $y(n+1)$ be any point of the set $(O(x_{0},\dots,x_{n}) \cap Y) \setminus S_{m_{n}+1}$. Notice that the set $(O(x_{0},\dots,x_{n}) \cap C) \setminus S_{m_{n}+1}$ is not empty because $S_{m_{n}+1}$ is nowhere dense in $C$, and $(O(x_{0},\dots,x_{n}) \cap Y) \setminus S_{m_{n}+1}$ is not empty since $Y$ is dense in $C$. Let $m_{n+1} = m_{n} + 1$, and define Player I's move as $x_{n+1} = y(n+1)_{n+1}$.

Case 2: otherwise. In this case we let $y(n+1) = y(n)$, $m_{n+1} = m_{n}$, and define Player I's move as $x_{n+1} = y(n+1)_{n+1}$.

Notice that in either case \eqref{eqn.ind} holds for $n+1$. This completes the definition of Player I's strategy.

The intuition behind this definition could be explained as follows: Player I starts by zooming in on the point $y(0)$ chosen to be in $Y$ but not in $S_{0}$. Player I awaits a stage $n$ where Player II would make a move $v_{n} > r - 1$, and where the set $S_{0}$ would be "excluded". As soon as such a stage is reached, Player I switches to an element $y(1)$, chosen to be in $Y$ but not in $S_{1}$. He then zooms in on $y(1)$, awaiting a stage where Player I would make a move $v_{n} > r - 1/2$, and where $S_{1}$ would be "excluded". As soon as such a stage occurs, Player I switches to an element $y(2)$ chosen to be in $Y$ but not in $S_{2}$. And so on.

We argue that the Player I's strategy is winning.

Suppose first that $\limsup v_{n} \leq r - 2^{-m}$ for some $m \in \mathbb{N}$. Then Case 1 occurs at most finitely many times. Let $N$ be the last stage when Case 1 occurs (or $N = 0$ if Case 1 never occurs). Then the point $x$ produced by Player I equals to $y(N)$. We thus have $\limsup v_{n} \leq r - 2^{-m} < r \leq f(y(N)) = f(x)$.

Suppose now that $\limsup v_{n} \geq r$. We argue that Case 1 occurs infinitely many times. Suppose to the contrary and let $N$ be the last stage when Case 1 occurs (or $0$ if Case 1 never occurs). Then $m_{N} = m_{N+1} = \cdots$ and $y(N) = y(N+1) = \cdots = x$. There are infinitely many $n > N$ with $v_{n} > r - 2^{-m_{N}}$, and for each such $n$ the neighborhood $O(x_{0},\dots, x_{n})$ of $x$ has a point in common with $S_{m_{N}}$. This implies that $x \in S_{m_{N}}$. This, however, contradicts the choice of $y(N)$. This establishes that Case 1 occurs infinitely many times.

Let $x$ be the point constructed by Player I. We argue that $x \in C \setminus Y$. In view of \eqref{eqn.ind}, $x$ is a limit of the sequence $y(0),y(1),\dots$. Since each $y(n)$ is an element of the closed set $C$, so is $x$. To see that $x$ is not an element of $Y$, suppose to the contrary. Then $x \in S_{m}$ for some $m \in \mathbb{N}$. Since Case 1 occurs infinitely often, the sequence $m_{0},m_{1},\dots$ runs through all natural numbers, so we can choose $n \in \mathbb{N}$ to be the largest number such that $m_{n} = m$. This choice implies that Case 1 occurs at stage $n$, and hence $O(x_{0},\dots,x_{n})$ is disjoint from $S_{m_{n}}$, leading to a contradiction.

It follows that $x$ is not an element of $\{f \ge r\}$. Thus $\limsup v_{n} \geq r > f(x)$, which completes the proof.
\end{proof}

\begin{rem}
For an arbitrary set $H \subseteq X$ the existence of a Cantor set $C \subseteq X$ such that $C \cap H$ is meager and dense in $C$ is equivalent to the existence of a Cantor set $C \subseteq X$ such that $C \cap H$ is countable and dense in $C$. This either follows from Theorem \ref{thm.PlayerI} and Theorem \ref{thm.PlayerIconverse} applied to $1_H$ and $r=\frac12$, or can also be proved directly by a standard Cantor scheme construction.
\end{rem}

If the range of the function $f$ does not contain a strictly increasing sequence, the condition of Theorem \ref{thm.PlayerI} is both sufficient and necessary for Player I to have a winning strategy. The proof relies on a Kechris--Louveau--Woodin separation theorem (Kechris \cite[Theorem 21.22]{Kechris[1995]}).

\smallskip
For a set $R \subseteq \mathbb{R}$, and function  $f: X \to \mathbb{R}$ define the game $\Gamma_R(f)$ similarly to $\Gamma(f)$, but allowing  Player II to choose $v_i$'s from $R$ instead of the whole real line.

\begin{lemma}\label{l:winning strategy in gamma_R}
  Let  $R \subseteq \mathbb{R}$ and $f: X \to R$. Then Player I has a winning strategy in the game $\Gamma_R(f)$ if and only if Player I has a winning strategy in $\Gamma(f)$.
\end{lemma}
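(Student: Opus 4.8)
The plan is to prove the two implications separately. The direction ``Player~I wins $\Gamma(f)$ $\Rightarrow$ Player~I wins $\Gamma_R(f)$'' is immediate and needs no rounding: since $R \subseteq \mathbb{R}$, every legal run of $\Gamma_R(f)$ is also a legal run of $\Gamma(f)$, so a winning strategy $\tau$ of Player~I in $\Gamma(f)$, regarded unchanged as a strategy in $\Gamma_R(f)$, still defeats every strategy of Player~II there. All the work goes into the converse.

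For the converse, suppose Player~I has a winning strategy $\sigma$ in $\Gamma_R(f)$. I would let Player~I play $\Gamma(f)$ by internally simulating a run of $\Gamma_R(f)$: whenever Player~II plays a real $v_t$, Player~I rounds it to some $w_t \in R$, feeds $w_t$ to $\sigma$, and copies $\sigma$'s answer as his actual move. Concretely, write $d(v,R) = \inf_{r \in R}|v-r|$ and, for each $t \in \mathbb{N}$, fix a map $\rho_t \colon \mathbb{R} \to R$ with $|\rho_t(v) - v| < d(v,R) + 2^{-t}$ for every $v$ (such a value of $R$ exists by the definition of the infimum, and for $v \in R$ one may take $\rho_t(v)=v$). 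Player~I's strategy $\sigma'$ in $\Gamma(f)$ plays $x_0 = \sigma(\emptyset)$ and, after observing $v_0,\dots,v_n$, plays $x_{n+1} = \sigma(\rho_0(v_0),\dots,\rho_n(v_n))$. By construction the branch $x$ and the moves $x_0,x_1,\dots$ produced in a run of $\sigma'$ coincide with those of the simulated $\Gamma_R(f)$-run in which Player~I follows $\sigma$ and Player~II plays $w_t = \rho_t(v_t) \in R$.

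The heart of the argument is the claim that $\limsup_t v_t = c$ with $c \in R$ forces $\limsup_t w_t = c$. For the lower bound I would take a subsequence $v_{t_k} \to c$; since $c \in R$ we get $d(v_{t_k},R) \le |v_{t_k}-c| \to 0$, hence $w_{t_k} \to c$ and $\limsup_t w_t \ge c$. For the upper bound, fix $\varepsilon > 0$; all but finitely many $t$ satisfy $v_t \le c+\varepsilon$, and for these the estimate $d(v_t,R) \le |v_t - c|$ (again using $c \in R$) gives $w_t < v_t + |v_t - c| + 2^{-t} \le c + 2\varepsilon + 2^{-t}$, so $\limsup_t w_t \le c + 2\varepsilon$; letting $\varepsilon \to 0$ yields $\limsup_t w_t \le c$.

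Finally I would check that $\sigma'$ wins: in a run following $\sigma'$, since $\sigma$ wins the simulated run we have $f(x) \ne \limsup_t w_t$; if $f(x) = \limsup_t v_t$ held, then $c := f(x)$ would lie in $R$ (because $f$ maps into $R$), so the claim would give $\limsup_t w_t = f(x)$, a contradiction, whence $f(x) \ne \limsup_t v_t$ and Player~I wins. I expect the main obstacle to be exactly the rounding step: since $R$ need not be closed, a real $v$ may be a limit point of $R$ lying outside $R$, so no genuine nearest point of $R$ exists; the vanishing tolerances $2^{-t}$ absorb this, while the repeated use of $c \in R$ (to bound $d(v,R)$ by $|v-c|$) is what simultaneously secures both the lower and the upper bound in the claim. $\Box$\\
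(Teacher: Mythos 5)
Your proposal is correct and follows essentially the same route as the paper's proof: the easy direction by restricting the strategy, and the converse by rounding Player~II's real moves to elements of $R$ within $d(v,R)+2^{-t}$ (the paper uses tolerance $\tfrac1n$), simulating the winning strategy of $\Gamma_R(f)$ on the rounded moves, and verifying that when $\limsup_t v_t$ lands in $R$ the rounded sequence has the same $\limsup$. The only differences are cosmetic, e.g.\ you phrase the final step as a contradiction where the paper argues directly, and your vanishing tolerance $2^{-t}$ even sidesteps the paper's harmless slip of writing $\tfrac1n$ at $n=0$.
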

\begin{proof}
It is straightforward to check that if Player I has a winning strategy in $\Gamma(f)$ then the restriction of this strategy is winning for Player I in $\Gamma_R(f)$. 

Conversely, fix a winning strategy $\sigma_R$ of Player I for the game $\Gamma_R(f)$. For each $n \in \mathbb{N}$ define $F_n: \mathbb{R} \to R$ so that
  \begin{equation} \label{Fn}
    \forall y \in \mathbb{R}, \ n \in \mathbb{N}: \  |F_n(y) - y | < \text{d}(y,R) + \tfrac{1}{n} 
  \end{equation}
holds. Now define Player I's strategy $\sigma$ in $\Gamma(f)$ as follows: let $\sigma(\emptyset) = \sigma_R(\emptyset)$, and let
\begin{equation}\label{nyero} 
\sigma(x_0,v_0,x_1,v_1, \dots, x_n,v_n) = \sigma_R(x_0, F_0(v_0), x_1, F_1(v_1), \dots, x_n, F_n(v_n))
\end{equation}
whenever $n \in \mathbb{N}$ and $(x_0, \dots, x_n) \in T$.  

It remains to check that $\sigma$ is a winning strategy for Player I in $\Gamma(f)$.
Fix a run $x_0,v_0,x_1,v_1, \dots$ of the game $\Gamma(f)$ consistent with $\sigma$, i.e. such that for each $n \in \mathbb{N}$, $\sigma(x_0,v_0, \dots, x_n,v_n) = x_{n+1}$. Then $\eqref{nyero}$ implies that for each $n$
\begin{equation} \sigma_R(x_0, F_0(v_0), x_1, F_1(v_1), \dots, x_n, F_n(v_n)) = x_{n+1}, \end{equation}
  and as $\sigma_R$ is a winning strategy for Player I in $\Gamma_{R}(f)$, we obtain that
  \[ f(x_0,x_1,x_2, \dots) \neq \limsup_{n \to \infty} F_n(v_n). \]
  We have to check that $f(x_0,x_1,x_2, \dots) \neq \limsup_{n \to \infty} v_n$. First, if $\limsup_{n \to \infty} v_n \notin R \supseteq \text{ran}(f)$, we are done. Otherwise $\limsup_{n \to \infty} v_n = r \in R$, therefore for each $\varepsilon > 0$, for all but finitely many $k$ we have $v_k < r + \varepsilon$, thus $\eqref{Fn}$ implies $F_k(v_k) < r + 2\varepsilon + \frac{1}{k}$ for these cofinitely many $k$'s,
  therefore $\limsup_{k \to \infty} F_k(v_k) \leq r$. This argument also shows that as for infinitely many $k$, $v_k > r-\varepsilon$ holds, we have $F_k(v_k) > r - 2\varepsilon - \frac{1}{k}$ for infinitely many $k$ too, thus \[ \limsup_{n \to \infty} v_n = r = \limsup_{n \to \infty} F_n(v_n) \neq f(x_0,x_1,x_2, \dots), \] as desired.
\end{proof}


\begin{thm}\label{thm.PlayerIconverse}
Consider a function $f : X \to \mathbb{R}$ such that the range of $f$ contains no infinite strictly increasing sequence. If Player I has a winning strategy in $\Gamma(f)$, then there is a number $r \in \mathbb{R}$ and a Cantor set $C \subseteq X$ such that the set $C \cap \{f \ge r\}$ is countable and dense in $C$. 
\end{thm}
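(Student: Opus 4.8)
The plan is to turn Player I's winning strategy into a continuous map and read the required Cantor set off that map, using the Kechris--Louveau--Woodin separation theorem to supply the descriptive complexity that $f$ itself lacks. By Lemma \ref{l:winning strategy in gamma_R} I may assume Player II moves in $R=\mathrm{ran}(f)$, and I record the structural consequence of the hypothesis: a subset of $\mathbb{R}$ with no infinite strictly increasing sequence is reverse-well-ordered (every nonempty subset has a maximum) and therefore countable. Fix a winning strategy for Player I and let $\Phi\colon\mathbb{R}^{\mathbb{N}}\to X$ be the continuous map sending a sequence $v=(v_0,v_1,\dots)$ of Player II's moves to the branch Player I produces by following the strategy; the winning condition then reads $f(\Phi(v))\neq\limsup_n v_n$ for every $v$. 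For $s\in R$ set $P_s=\Phi(\{v:\limsup_n v_n=s\})$, an analytic subset of $X$ since its defining set is Borel. Winning gives $P_s\subseteq\{f\neq s\}$, and since every point has an $f$-value in $R$, the family $(P_s)_{s\in R}$ has \emph{empty} intersection: a point lying in all $P_s$ would have $f$-value distinct from every element of $R$.

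To isolate the mechanism, consider first the two-valued case $f=1_B$, where it suffices to restrict $\Phi$ to plays in $\{0,1\}$, yielding a continuous $\psi\colon 2^{\mathbb{N}}\to X$. Let $E$ be the comeager set of sequences with infinitely many $1$'s and $F=2^{\mathbb{N}}\setminus E$ the countable dense remainder. Winning forces $\psi(E)\subseteq\{f\neq 1\}=B^{c}$ and $\psi(F)\subseteq\{f\neq 0\}=B$, so $K=\psi(2^{\mathbb{N}})$ is compact with $K\cap B=\psi(F)$ countable and dense in $K$. Moreover $K$ has no isolated point, since any clopen piece of $2^{\mathbb{N}}$ meets both $E$ and $F$ while $\psi(E),\psi(F)$ are disjoint. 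As $X$ is zero-dimensional, the nonempty perfect compact set $K$ is a Cantor set, and $C=K$ witnesses the claim for $r=1$. This is the template I wish to reproduce: two dense disjoint images, one countable and inside the relevant superlevel set.

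For general $f$ I would first produce a Borel surrogate. Applying the Kechris--Louveau--Woodin separation theorem to $(P_s)_{s\in R}$ gives Borel sets $B_s\supseteq P_s$ with $\bigcap_{s\in R}B_s=\emptyset$. Since $R$ is reverse-well-ordered, $g(x)=\max\{s\in R:x\notin B_s\}$ defines a Borel function $g\colon X\to R$, and the inclusion $\{v:\limsup_n v_n=s\}\subseteq\Phi^{-1}(B_s)$ shows that the \emph{same} $\Phi$ witnesses $g(\Phi(v))\neq\limsup_n v_n$ for all $v$; that is, Player I's strategy remains winning for $\Gamma(g)$. Were $g$ a limsup function, Player II would win $\Gamma(g)$ by Lemma \ref{l:player II wins iff f is limsup}, contradicting that $\Phi$ is winning for Player I. Hence $g$ is not a limsup function, so by Theorem \ref{thm.charlimsup} some $\{g\ge r\}$ fails to be $G_\delta$; as $g$ is Borel these superlevel sets are Borel, and a Hurewicz-type dichotomy (a form of the Kechris--Louveau--Woodin theorem) produces a threshold $r$ and a Cantor set $C$ with $C\cap\{g\ge r\}$ countable and dense in $C$. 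It then remains to pass from $g$ back to $f$, which I would attempt by taking the separators $B_s$ as tight as the extremal structure allows and by placing the threshold at a maximal ``bad'' value, where $\{f\ge r\}=\{f=r\}$ and $\Phi$-plays with $\limsup=r$ are trapped in $\{f<r\}$ exactly as in the two-valued case.

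The main obstacle is precisely this transfer. Because $f$ is arbitrary, its level sets have no intrinsic complexity, so all structure must be manufactured from the continuous map $\Phi$; the separation theorem genuinely converts the analytic data $(P_s)$ into a Borel function $g$, but $g$ and $f$ need not coincide — indeed forcing $g=f$ would require separating $\{f<s\}$ from $\{f=s\}$ by a Borel set, which is impossible in general. The reverse-well-order is the decisive resource for controlling this discrepancy: it makes the maximum in the definition of $g$ meaningful, it lets me work at the topmost relevant value (where the superlevel set collapses to a single level set and the winning condition becomes one-sided), and it guarantees that a descent through successive bad thresholds terminates. Establishing that the Cantor witness for $g$ can be taken inside the region where $g$ and $f$ agree on the relevant superlevel set — equivalently, that the perfect-image construction of the two-valued case survives the multi-valued bookkeeping once combined with the separation step — is the crux on which the whole argument turns. $\Box$\\
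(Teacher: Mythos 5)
Your opening moves coincide with the paper's (reduction to $\Gamma_R$ via Lemma \ref{l:winning strategy in gamma_R}, the continuous map induced by Player I's strategy, the analytic images $P_s$ with $P_s \cap \{f = s\} = \emptyset$, and the plan to cook up a definable surrogate against which the strategy remains winning), but the argument has a genuine gap — one you candidly flag yourself in the last paragraph. Your surrogate $g(x)=\max\{s\in R: x\notin B_s\}$ is built from a Novikov-style enclosure of the analytic family $(P_s)$ (incidentally, the step ``analytic sets with empty intersection can be enclosed in Borel sets with empty intersection'' is Novikov's separation theorem, not Kechris--Louveau--Woodin), and nothing in that enclosure ties $B_s$ to the level sets of $f$. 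So while you correctly conclude that Player I wins $\Gamma(g)$, that $g$ is not a limsup function, and hence (via Hurewicz, since $g$ is Borel) that some Cantor set $C$ meets $\{g\ge r\}$ in a countable dense set, this witness is for $g$, and $g$ can disagree with $f$ everywhere: the Cantor set you obtain may sit entirely inside $\{f<r'\}$ for every $r'$. The proposed repairs do not close this: ``placing the threshold at a maximal bad value'' only helps at $r=\max R$, where the bad level of $g$ need not be, and ``taking the separators as tight as possible'' would require separating $\{f<s\}$ from $\{f=s\}$ by Borel sets, which, as you note, is impossible for arbitrary $f$. In effect you have re-proved Theorem \ref{thm.PlayerIconverse2} for the Borel function $g$; the content of the present theorem is precisely the passage to arbitrary $f$, and that passage is what is missing.

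The paper avoids the transfer problem by running the whole argument contrapositively and anchoring the separation to $f$ itself. Assume no level $r$ admits a Cantor set $C$ with $C\cap\{f\ge r\}$ countable and dense in $C$. The Kechris--Louveau--Woodin dichotomy (Kechris, Theorem 21.22) then says that the \emph{arbitrary} set $\{f\ge r_\beta\}$ can be separated from any disjoint analytic set by a $\mathbf{\Pi}^0_2$ set — this is where the contradiction hypothesis does its work, and it produces separators $G_\beta$ that \emph{contain the superlevel sets of $f$} and are $\mathbf{\Pi}^0_2$, not merely Borel. A transfinite recursion along the reverse well-order (separating $\{f\ge r_\beta\}$ from the trimmed analytic set $A_{r_\beta}\setminus\bigcup_{\alpha<\beta}\bigcap_{\alpha\le\gamma<\beta}G_\gamma$) and the tail intersections $E_\beta=\bigcap_{\gamma\ge\beta}G_\gamma$ yield a surrogate $e(x)=r_\beta$ (least $\beta$ with $x\in E_\beta$) whose superlevel sets are exactly the $\mathbf{\Pi}^0_2$ sets $E_\beta$, so $e$ is already a limsup function, while $\{e=r_\beta\}\cap A_{r_\beta}=\emptyset$ keeps Player I's strategy winning in $\Gamma_R(e)$. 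The contradiction is then immediate from Lemma \ref{l:player II wins iff f is limsup}: both players would win $\Gamma(e)$. Note that even granting your Borel enclosure, a single separation per level could not make the surrogate's superlevel sets $\mathbf{\Pi}^0_2$ — the trimming by $\bigcup_{\alpha<\beta}\bigcap_{\alpha\le\gamma<\beta}G_\gamma$ is essential bookkeeping, and it is the reason the paper needs no Hurewicz step and no transfer back to $f$ at all. (Two smaller points: the induced map is continuous, and the $P_s$ analytic, only when the move space is the \emph{countable discrete} $R^{\mathbb{N}}$, not $\mathbb{R}^{\mathbb{N}}$ with its usual topology; and the paper takes $R$ to be the \emph{closure} of the range, which stays reverse-well-ordered and closed and smooths the handling of $\limsup v_n\notin\mathrm{ran}(f)$.)
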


\begin{proof}
Define $R$ to be the closure of the range of $f$. Then it is easy to verify that $R$ contains no infinite strictly increasing sequence. Hence, the usual order $>$ of the reals is a well ordering of $R$. Let $\rho$ be the order type of $(R,>)$, and let $\alpha \mapsto r_{\alpha}$ be the bijective map from $\rho$ to $R$ such that $r_{\alpha} > r_{\beta}$ whenever $\alpha < \beta$. Notice that $\rho$ is a countable ordinal.


Assume that Player I has a winning strategy in $\Gamma(f)$. Then by Lemma \ref{l:winning strategy in gamma_R} Player I also has a winning strategy in $\Gamma_R (f)$. Let $\sigma_R$ be such a strategy. Let  $g : R^{\mathbb{N}} \to X$ be the continuous function induced by $\sigma_R$. Here $R$ is given its discrete topology; since $R$ is countable, $R^{\mathbb{N}}$ is a Polish space. For each $r \in R$ let $L_{r} = \{v \in R^{\mathbb{N}}: \limsup_{t \to \infty}v_{t} = r\}$, and let $A_{r} = g(L_{r})$. The set $A_{r}$ is analytic. Moreover, 
\begin{equation}\label{eqn.disjoint}
\{f = r\} \cap A_{r} = \emptyset.  
\end{equation}

Suppose that the function $f$ fails to satisfy the conclusion of the theorem, that is, there is no number $r\in \mathbb{R}$ and Cantor set $C\subseteq X$ such that $C\cap \{f\ge r\}$ is countable and dense in $C$. We obtain a contradiction by showing that there exists a limsup function $e : X \to R$ such Player I has a winning strategy in the game $\Gamma(e)$. More precisely, we show that $\sigma_R$ is a winning strategy for Player I in $\Gamma_R(e)$, which suffices by Lemma \ref{l:winning strategy in gamma_R}.

Note that a function $e : X \to R$ is a limsup function if and only if $\{e \ge r\}$ is a $\mathbf{\Pi}_{2}^{0}$ set for each $r \in R$, using that $R$ is closed. 

We define recursively a sequence $(G_{\alpha}:\alpha < \rho)$ of $\mathbf{\Pi}_{2}^{0}$ subsets of $X$ such that $\{f \ge r_\alpha\} \subseteq G_{\alpha}$. Let $\beta < \rho$ be an ordinal such that the sets $(G_{\alpha}:\alpha < \beta)$ have been defined. In particular, notice that
\begin{equation}\label{eqn.disjoint1}
\{f > r_{\beta}\} \subseteq \bigcup_{\alpha < \beta}\, \bigcap_{\gamma:\, \alpha \leq \gamma <\beta} G_{\gamma}. 
\end{equation}

Since $f$ fails to satisfy the condition of the theorem, there exists no Cantor set $C \subseteq X$ such that $C \cap \{f \ge r_\beta \}$ is countable and dense in $C$. This implies (using Kechris \cite[Theorem 21.22]{Kechris[1995]}) that $\{f \ge r_\beta \}$ can be separated from any disjoint analytic subset of $X$ by a $\mathbf{\Pi}_{2}^{0}$ set. Consider the set 
\begin{equation}\label{eqn.set}
A_{r_{\beta}} \Big\backslash \bigcup_{\alpha < \beta}\, \bigcap_{\gamma:\, \alpha \leq \gamma <\beta} G_{\gamma}.
\end{equation}
It is analytic, since $\beta$ is a countable ordinal. Moreover, it is 
disjoint from $\{f \ge r_\beta\}$ as can be seen from \eqref{eqn.disjoint} and \eqref{eqn.disjoint1}. Hence there exists a $\mathbf{\Pi}_{2}^{0}$ subset $G_{\beta}$ of $X$ containing $\{f \ge r_\beta\}$ and disjoint from \eqref{eqn.set}. Thus $\{f \ge r_\beta\} \subseteq G_{\beta}$ and 
\begin{equation}\label{eqn.disjoint3}
G_{\beta} \cap A_{r_{\beta}} \subseteq \bigcup_{\alpha < \beta}\, \bigcap_{\gamma:\, \alpha \leq \gamma <\beta} G_{\gamma}.
\end{equation}
This concludes the recursive definition of the sequence $(G_\alpha: \alpha <\rho)$.

Now, for an arbitrary $\beta < \rho$ define the set 
\[E_{\beta} = \bigcap_{\gamma:\, \beta \leq \gamma < \rho} G_{\gamma}.\]
This is a $\mathbf{\Pi}_{2}^{0}$ set, since $\rho$ is a countable ordinal. Moreover, $\{f \ge r_\beta \} \subseteq E_{\beta}$ for each $\beta < \rho$, hence $X = \bigcup_{\beta<\rho}E_{\beta}$. In view of \eqref{eqn.disjoint3}, we have
\begin{equation}\label{eqn.disjoint2}
E_{\beta} \cap A_{r_{\beta}} \subseteq \bigcup_{\alpha < \beta} E_{\alpha}.
\end{equation}
Define $e \colon X \to R$ by letting $e(x) = r_{\beta}$, where $\beta < \rho$ is the smallest ordinal such that $x \in E_{\beta}$. Then $\{e \ge r_\beta\} = E_{\beta}$ for each $\beta<\rho$, so $e$ is a limsup function. Moreover, 
$\{e = r_{\beta}\}$ equals the set $E_{\beta} \setminus \bigcup_{\alpha < \beta}E_{\alpha}$, which is disjoint from $A_{r_{\beta}}$ by \eqref{eqn.disjoint2}. This shows that Player I's strategy $\sigma_R$ remains winning in the game $\Gamma_{R}(e)$, yielding the desired contradiction.
\end{proof}

Next we show that the above necessary and sufficient condition for the existence of a winning strategy for Player I also holds if we assume that $f$ is sufficiently definable, e.g. Borel measurable. 

We say that the function $f$ is \textit{semi-Borel} if for each $r \in \mathbb{R}$, the set $\{f \ge r \}$ is co-analytic.

\begin{thm}
\label{thm.PlayerIconverse2}
Let $f : X \to \mathbb{R}$ be semi-Borel. If Player I has a winning strategy in $\Gamma(f)$, then there is a number $r \in R$ and a Cantor set $C \subseteq X$ such that the set $C \cap \{f \ge r\}$ is countable and dense in $C$. 
\end{thm}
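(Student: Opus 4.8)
The plan is to argue by contradiction and, under the failure of the conclusion, to prove that $f$ is in fact a limsup function—which is impossible, since Player I has a winning strategy. Concretely, suppose there is no $r\in\mathbb{R}$ and no Cantor set $C\subseteq X$ for which $C\cap\{f\ge r\}$ is countable and dense in $C$. I will deduce that $\{f\ge r\}$ is $\mathbf{\Pi}^0_2$ for every $r$, whence $f$ is a limsup function by the equivalence \ref{item.limsup}$\Leftrightarrow$\ref{item.contoursets} of Theorem \ref{thm.charlimsup}. As in the proof of Theorem \ref{thm.PlayerIconverse}, this uses the Kechris--Louveau--Woodin separation theorem (Kechris \cite[Theorem 21.22]{Kechris[1995]}); the difference is that the semi-Borel hypothesis lets me avoid the transfinite recursion there, by separating $\{f\ge r\}$ from its own complement.

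The key step is the following. Fix $r\in\mathbb{R}$. Since $f$ is semi-Borel, $\{f\ge r\}$ is co-analytic, so its complement $\{f<r\}=X\setminus\{f\ge r\}$ is analytic. By assumption there is no Cantor set $C$ with $C\cap\{f\ge r\}$ countable and dense in $C$; hence, exactly as in Theorem \ref{thm.PlayerIconverse}, Kechris \cite[Theorem 21.22]{Kechris[1995]} lets us separate $\{f\ge r\}$ from any disjoint analytic set by a $\mathbf{\Pi}^0_2$ set. Applying this with the disjoint analytic set $\{f<r\}$ produces a $\mathbf{\Pi}^0_2$ set $G$ with $\{f\ge r\}\subseteq G$ and $G\cap\{f<r\}=\emptyset$. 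The second condition says $G\subseteq X\setminus\{f<r\}=\{f\ge r\}$, so in fact $G=\{f\ge r\}$, and therefore $\{f\ge r\}$ is a $\mathbf{\Pi}^0_2$ subset of $X$.

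Since $r$ was arbitrary, condition \ref{item.contoursets} of Theorem \ref{thm.charlimsup} holds, so $f$ is a limsup function. By Lemma \ref{l:player II wins iff f is limsup}, being a limsup function is equivalent to Player II having a winning strategy in $\Gamma(f)$; but Player I is assumed to have a winning strategy, so Player II cannot, and $f$ is not a limsup function---the desired contradiction. Essentially all the content sits in the key step, and the point I expect to require care is the invocation of \cite[Theorem 21.22]{Kechris[1995]}: one must verify that the hypothesis used here (no Cantor set $C$ with $C\cap\{f\ge r\}$ countable and dense in $C$) is precisely the one driving the separation in Theorem \ref{thm.PlayerIconverse}, and that running it against the \emph{complement} $\{f<r\}$---which is analytic exactly because $f$ is semi-Borel---forces the separating $\mathbf{\Pi}^0_2$ set to coincide with $\{f\ge r\}$. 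It is this use of the analyticity of the complement that replaces the transfinite construction of Theorem \ref{thm.PlayerIconverse}, where $\{f<r\}$ need not be analytic and the values of $f$ need not be well-ordered by $>$.
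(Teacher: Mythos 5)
Your proof is correct and is essentially the paper's argument: the paper likewise observes that if every level set $\{f \ge r\}$ were $\mathbf{\Pi}^0_2$ then $f$ would be a limsup function and Player II would have a winning strategy (impossible since Player I has one), and then applies the Hurewicz dichotomy to a co-analytic level set that is not $\mathbf{\Pi}^0_2$ to produce the Cantor set. The only cosmetic difference is that the paper cites the Hurewicz theorem (Kechris, Theorem 21.18) directly, which is exactly the special case of the Kechris--Louveau--Woodin separation theorem that you invoke, namely separating the co-analytic set $\{f \ge r\}$ from its analytic complement $\{f < r\}$.
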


\begin{proof}
If for each $r \in \mathbb{R}$, $\{f \ge r \}$ is a $\mathbf{\Pi}_{2}^{0}$ set, then $f$ is a limsup function, hence Player II has a winning strategy in $\Gamma(f)$, a contradiction. Hence $\{f \ge r \}$ is not a $\mathbf{\Pi}_{2}^{0}$ set for some $r \in \mathbb{R}$. Then the Hurewicz Theorem (see e.g. Kechris \cite[Theorem 21.18]{Kechris[1995]}) implies that there is a Cantor set $C$ such that the set $C \cap \{f \ge r\}$ is countable and dense in $C$.
\end{proof}

\begin{cor}\label{cor.semi-Borel->determined}
If $f$ is semi-Borel, then the game $\Gamma(f)$ is determined.
\end{cor}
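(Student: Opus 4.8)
The plan is to establish a clean dichotomy: one of the two players always has a winning strategy, according to whether or not $f$ is a limsup function. First I would dispose of the case where $f$ is a limsup function. Here Lemma~\ref{l:player II wins iff f is limsup} applies immediately and hands Player II a winning strategy, so $\Gamma(f)$ is determined with nothing further to prove.

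The substance lies in the complementary case, where $f$ is semi-Borel but \emph{not} a limsup function. By the equivalence of conditions \ref{item.limsup} and \ref{item.contoursets} in Theorem~\ref{thm.charlimsup}, the failure of $f$ to be a limsup function means precisely that $\{f \ge r\}$ is not a $\mathbf{\Pi}_{2}^{0}$ set for some $r \in \mathbb{R}$. Since $f$ is semi-Borel, this particular set $\{f \ge r\}$ is co-analytic, which is exactly the definability hypothesis required to invoke the Hurewicz Theorem (Kechris~\cite[Theorem 21.18]{Kechris[1995]}), in the same manner as in the proof of Theorem~\ref{thm.PlayerIconverse2}. The Hurewicz dichotomy then produces a Cantor set $C \subseteq X$ such that $C \cap \{f \ge r\}$ is countable and dense in $C$.

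It remains to feed this into Theorem~\ref{thm.PlayerI}, whose hypothesis asks for $C \cap \{f \ge r\}$ to be meagre and dense, rather than countable and dense. The bridge is the elementary observation that a countable subset of a Cantor set is automatically meagre: $C$ is perfect, so each singleton is closed with empty interior in $C$, and a countable set is a countable union of such nowhere dense singletons. Hence $C \cap \{f \ge r\}$ is meagre and dense in $C$, Theorem~\ref{thm.PlayerI} then yields a winning strategy for Player I, and $\Gamma(f)$ is again determined. I do not expect a genuine obstacle here, as every ingredient is already available; the only points demanding care are matching the semi-Borel hypothesis to the co-analyticity required by Hurewicz, and recording the passage from ``countable and dense'' to ``meagre and dense''. $\Box$
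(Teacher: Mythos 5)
Your proposal is correct and follows essentially the same route as the paper's proof: the dichotomy on whether $f$ is a limsup function (via Theorem~\ref{thm.charlimsup} and Lemma~\ref{l:player II wins iff f is limsup} for Player II), and otherwise the Hurewicz Theorem applied to the co-analytic, non-$\mathbf{\Pi}_{2}^{0}$ set $\{f \ge r\}$ followed by Theorem~\ref{thm.PlayerI} for Player I. Your explicit bridge from ``countable and dense'' to ``meagre and dense'' (singletons are nowhere dense in a perfect set) is a detail the paper leaves implicit, and it is correctly supplied.
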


\begin{proof}
If for each $r \in \mathbb{R}$, $\{f \ge r \}$ is a $\mathbf{\Pi}_{2}^{0}$ set, then $f$ is a limsup function, hence Player II has a winning strategy in $\Gamma(f)$. Otherwise, $\{f \ge r \}$ is not a $\mathbf{\Pi}_{2}^{0}$ set for some $r \in \mathbb{R}$, and as above, Hurewicz Theorem  implies that there is a Cantor set $C$ such that the set $C \cap \{f \ge r\}$ is countable and dense in $C$, therefore Player I has a winning strategy by Theorem  \ref{thm.PlayerI}.
\end{proof}

Next we will show that in general the condition of Theorem \ref{thm.PlayerI} is not equivalent to the existence of a winning strategy for Player I. More precisely, we will show in Corollary \ref{c:asc} that the restriction on the range of $f$ in Theorem \ref{thm.PlayerIconverse} is optimal; if $R \subseteq \mathbb{R}$ contains an infinite strictly increasing sequence then there exists a function $f : \mathbb{N}^\mathbb{N} \to R$ such that Player I has a winning strategy in $\Gamma(f)$, and $C \cap \{f \ge r\}$ is either uncountable or empty for each $r \in \mathbb{R}$ and each Cantor set $C \subseteq \mathbb{N}^\mathbb{N}$.

\begin{thm}
  \label{t:example with countable range}
  There exists a function $f : \mathbb{N}^\mathbb{N} \to \mathbb{N}$ such that Player I has a winning strategy in $\Gamma(f)$, and $C \cap \{f \ge r\}$ is uncountable for each $r \in \mathbb{R}$ and each Cantor set $C \subseteq \mathbb{N}^\mathbb{N}$. 
\end{thm}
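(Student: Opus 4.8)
The plan is to split the theorem into two almost independent tasks: reformulate the property required of the level sets into something easy to arrange, and design a simple \emph{escape} strategy for Player~I that needs $f$ to be controllable only along the plays it actually produces. By Lemma~\ref{l:winning strategy in gamma_R} it suffices to give Player~I a winning strategy in $\Gamma_{\mathbb N}(f)$, so I may assume that Player~II's moves $v_0,v_1,\dots$ are natural numbers, and hence $\limsup_n v_n\in\mathbb N\cup\{\infty\}$.

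First I would reformulate the conclusion. Since $f$ takes values in $\mathbb N$, it is enough to treat $\{f\ge n\}$ for $n\in\mathbb N$, and I claim
\[
\big[\forall\,C\ \text{Cantor}:\ C\cap\{f\ge n\}\ \text{uncountable}\big]\iff \{f\le n-1\}\ \text{contains no nonempty perfect set}.
\]
Indeed, if $C\cap\{f\ge n\}$ were countable, then $C\cap\{f\le n-1\}$ would be a Cantor set minus countably many points, which still contains a perfect set lying in $\{f\le n-1\}$; conversely, any perfect set inside $\{f\le n-1\}$ contains a Cantor set disjoint from $\{f\ge n\}$. Hence the conclusion of the theorem is equivalent to the single clean requirement that \emph{$f$ be unbounded on every nonempty perfect subset of $\mathbb N^{\mathbb N}$}.

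The device making the two tasks compatible is \emph{tail invariance}: I construct $f$ so that $f(x)=f(y)$ whenever $x$ and $y$ are eventually equal. Each tail class is countable and \emph{dense}, and every perfect set meets continuum many tail classes. I then build $f$ by a transfinite Bernstein-type recursion: enumerate all nonempty perfect sets as $(P_\alpha)_{\alpha<\mathfrak c}$, and at stage $\alpha$, for every $k\in\mathbb N$ choose a tail class meeting $P_\alpha$ that has not been used before and declare $f\equiv k$ on it; colour every remaining class by $0$. At stage $\alpha$ fewer than $\mathfrak c$ classes have been used, while $P_\alpha$ meets $\mathfrak c$ of them, so fresh classes are always available, and colouring whole classes keeps $f$ tail invariant. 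By construction each $P_\alpha$ carries points of $f$-value $k$ for every $k$, so $f$ is unbounded on every perfect set, which is exactly what is needed. Fix once and for all points $p^{(\ell)}$ with $f(p^{(\ell)})=\ell$ for every $\ell\ge 1$. Player~I's strategy maintains a \emph{level} $\ell$, initialised to $1$, with current target $p^{(\ell)}$: while at level $\ell$ he copies the target, playing $x_m=p^{(\ell)}_m$, and upon seeing $v_n$ he \emph{escapes}, incrementing $\ell$ and re-targeting $p^{(\ell+1)}$, exactly when $v_n\ge\ell$. If the level stabilises at some $\ell^*$, then from the last escape onward $x$ agrees with $p^{(\ell^*)}$, so tail invariance gives $f(x)=\ell^*$, while the absence of later escapes forces $v_n<\ell^*$ eventually, whence $\limsup_n v_n\le \ell^*-1<\ell^*=f(x)$. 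If the level grows without bound, then for every $\ell$ some move has $v_n\ge\ell$, so $\limsup_n v_n=\infty$, while $f(x)\in\mathbb N$ is finite. In either case $f(x)\neq\limsup_n v_n$, so Player~I wins.

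I expect the construction of $f$ to be the main obstacle, precisely because it must meet two opposing demands: $f$ has to be wild enough to be unbounded on every perfect set, yet tame enough that Player~I can pin down $f(x)$ exactly in the stabilising case. Tail invariance is what reconciles them, since it lets Player~I determine $f(x)$ by merely matching a tail while still leaving continuum many classes through each perfect set for the recursion to spread arbitrarily large values. The delicate points are the cardinality bookkeeping (fewer than $\mathfrak c$ classes used before stage $\alpha$, against the $\mathfrak c$ classes meeting $P_\alpha$) and the observation that, because only the tail of $x$ matters, the escaping re-targeting is always legal in the full tree $\mathbb N^{<\mathbb N}$ and yields, in the stabilising case, a point eventually equal to the final target.
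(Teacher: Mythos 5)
Your proof is correct, but it takes a genuinely different route from the paper's. The paper also reduces to $\Gamma_{\mathbb{N}}(f)$ via Lemma~\ref{l:winning strategy in gamma_R} and also runs a Bernstein-style transfinite recursion, but its diagonalization target is the single function $\varphi(x)=\limsup_{n}x_{n}$: it builds $f$ (choosing one new point $z_\alpha$ in each Cantor set $C_\alpha$ with value a natural number $\ge r_\alpha$ different from $\varphi(z_\alpha)$) so that $f(x)\neq\varphi(x)$ everywhere, and then Player~I's strategy is the one-line echo $x_{n+1}=v_{n}$, which forces $\limsup v_{n}=\varphi(x)\neq f(x)$. You instead put the combinatorial burden on a structural property of $f$ --- tail invariance plus unboundedness on every nonempty perfect set --- and put the game-theoretic burden on an explicit escalating-target strategy, where tail invariance is exactly what lets Player~I pin down $f(x)=\ell^{*}$ in the stabilizing case. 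Your reformulation of ``$C\cap\{f\ge n\}$ uncountable for every Cantor set $C$'' as ``$\{f\le n-1\}$ contains no nonempty perfect set'' replaces the paper's trick of splitting a Cantor set into continuum many Cantor sets to upgrade nonempty intersections to uncountable ones; both devices are standard and both of your directions check out (a Cantor set minus a countable set is an uncountable Polish $G_\delta$, hence contains a Cantor set). The cardinality bookkeeping in your recursion is sound: before stage $\alpha$ at most $|\alpha|\cdot\aleph_{0}<\mathfrak{c}$ tail classes are used, while $P_\alpha$, having cardinality $\mathfrak{c}$, meets $\mathfrak{c}$ many countable classes. What the paper's approach buys is portability: the $\varphi$-avoidance mechanism with the echo (or approximate-echo) strategy is reused essentially verbatim in Theorem~\ref{t:consistent example with coanalytic graph}, where the recursion must be threaded through Vidny\'anszky's theorem to get a co-analytic graph --- your tail-invariant $f$ would not obviously adapt there, since declaring $f$ constant on whole tail classes is tied to the fully non-definable Bernstein setting. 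What your approach buys is a cleaner conceptual split (a purely set-theoretic property of $f$, and a self-contained winning strategy whose two cases --- stabilization versus unbounded escalation --- are analyzed directly), at the cost of a slightly more elaborate strategy than the paper's copying trick.
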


\begin{proof}
First note that every Cantor set can be written as a disjoint union of uncountably many (in fact, continuum many) Cantor sets, since it is well-known that a Cantor set is homeomorphic to $2^I$ for every countably infinite set $I$, in particular to $2^{\mathbb{N} \times \mathbb{N}}$, which is homeomorphic to $2^\mathbb{N} \times 2^\mathbb{N} = \bigcup_{c \in 2^\mathbb{N}} \left(\{c\} \times 2^\mathbb{N} \right)$.

This implies that if $H$ is an arbitrary set, then in order to show that $C \cap H$ is uncountable for each Cantor set $C \subseteq \mathbb{N}^\mathbb{N}$, it suffices to show that
$C \cap H \neq \emptyset$ for each Cantor set $C \subseteq \mathbb{N}^\mathbb{N}$. 

Let $X = \mathbb{N}^{\mathbb{N}}$ and let $\varphi: X \to \mathbb{N} \cup \{+\infty\}$ be given by $\varphi(x) = \limsup_{n \to \infty}x_{n}$. We first argue that there exists a function $f : X \to \mathbb{N}$ such that (a) $f(x) \neq \varphi(x)$ for each $x \in X$, and (b) $C \cap \{f \ge r\} \neq \emptyset$ for each $r \in \mathbb{R}$ and each Cantor set $C \subseteq X$. We will then show that condition (a) implies that Player I has a winning strategy in $\Gamma(f)$, while we already argued that (b) implies that $C \cap \{f \ge r\}$ is uncountable for each $r \in \mathbb{R}$ and each Cantor set $C \subseteq \mathbb{N}^\mathbb{N}$.
  
Let $(r_{\alpha}:\alpha<\mathfrak{c})$, and $(C_{\alpha}:\alpha<\mathfrak{c})$ be enumerations of the real numbers and of the Cantor subsets of $X$, respectively. We define the pairs $(z_{\alpha},f(z_{\alpha})) \in X \times \mathbb{N}$ recursively as follows. Take an ordinal $\alpha < \mathfrak{c}$ and suppose that $(z_{\beta},f(z_{\beta}))$ has been defined for every $\beta < \alpha$. Let $z_{\alpha}$ be any point of $C_{\alpha} \setminus \{z_{\beta}:\beta<\alpha\}$. Define $f(z_{\alpha})$ to be the smallest natural number such that $f(z_{\alpha}) \geq r_{\alpha}$ and $f(z_{\alpha}) \neq \varphi(z_{\alpha})$. To complete the definition of $f$, for each point $x \in X \setminus \{z_{\beta}:\beta<\mathfrak{c}\}$ let $f(x)$ be the smallest natural number such that $f(x) \neq \varphi(x)$.
  
Now we show that Player I has a winning strategy in $\Gamma(f)$. Using Lemma \ref{l:winning strategy in gamma_R}, it is enough to show that Player I has a winning strategy in $\Gamma_{\mathbb{N}}(f)$. Let Player I start by playing $x_{0} = 0$. To a move $v_{n} \in \mathbb{N}$ of Player II in round $n$, Player I responds with $x_{n+1} = v_n$. Then, for a run $x_{0},v_{0},x_{1},v_{1},\dots$ of the game, it holds that $\varphi(x) = \limsup_{n \to \infty}x_{n} = \limsup_{n \to \infty}v_n$. Since $f(x) \neq \varphi(x)$, $f(x) \neq \limsup_{n \to \infty} v_n $, therefore the run is won by Player I. 
\end{proof}

\begin{cor}
\label{c:asc}
 If $R \subseteq \mathbb{R}$ contains an infinite strictly increasing sequence, then there exists a function $f : \mathbb{N}^\mathbb{N} \to R$ such that Player I has a winning strategy in $\Gamma(f)$, and $C \cap \{f \ge r\}$ is either uncountable or empty for each $r \in \mathbb{R}$ and each Cantor set $C \subseteq \mathbb{N}^\mathbb{N}$.
\end{cor}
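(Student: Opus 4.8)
The plan is to transfer the integer‑valued function of Theorem~\ref{t:example with countable range} to $R$ by composing it with the increasing enumeration of a suitable sequence in $R$, and to let Player I convert Player II's real moves into natural numbers through a step function whose jumps are placed strictly between the values of that sequence. Concretely, I would fix a strictly increasing sequence $s_0<s_1<\cdots$ in $R$ and let $f_0:\mathbb{N}^\mathbb{N}\to\mathbb{N}$ be the function from Theorem~\ref{t:example with countable range}; I will use that $f_0(x)\neq\limsup_n x_n$ for every $x$ (property (a) in its proof) and that $C\cap\{f_0\ge k\}$ is uncountable for every $k\in\mathbb{N}$ and every Cantor set $C$. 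Define $f:\mathbb{N}^\mathbb{N}\to R$ by $f(x)=s_{f_0(x)}$, so that the range of $f$ lies in $\{s_k:k\in\mathbb{N}\}\subseteq R$.

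The Cantor condition would then be routine. Since $k\mapsto s_k$ is increasing, for each $r\in\mathbb{R}$ the set $\{f\ge r\}$ is all of $\mathbb{N}^\mathbb{N}$ when $r\le s_0$, equals $\{f_0\ge m\}$ when $s_{m-1}<r\le s_m$ for some $m\ge 1$, and is empty when $r>\sup_k s_k$. In the first two cases $C\cap\{f\ge r\}$ is uncountable for every Cantor set $C$ (because a Cantor set is uncountable, respectively by the uncountability property of $f_0$), and in the last case it is empty; this is exactly the dichotomy required.

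The heart of the argument is Player I's winning strategy in $\Gamma(f)$. I would choose reals $t_k$ with $s_k<t_k<s_{k+1}$ for each $k$ and set $h(v)=|\{k:t_k<v\}|$, with the convention $h(v)=0$ when this count is infinite (which can happen only if $\sup_k s_k<\infty$ and $v\ge\sup_k s_k$). Player I opens with $x_0=0$ and answers a move $v_n$ by $x_{n+1}=h(v_n)$. If $\limsup_n v_n\notin\{s_k:k\in\mathbb{N}\}$, in particular if it equals $\pm\infty$, then it differs from $f(x)\in\{s_k\}$ and Player I wins outright. The key case is $\limsup_n v_n=s_m$: placing the thresholds strictly between consecutive $s_k$'s forces $\limsup_n x_n=\limsup_n h(v_n)=m$ exactly, since eventually $v_n<t_m$ gives $h(v_n)\le m$, infinitely often $v_n>t_{m-1}$ gives $h(v_n)\ge m$, and the finitely many moves falling in the capped region are harmless. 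Since $f_0(x)\neq\limsup_n x_n=m$, we conclude $f(x)=s_{f_0(x)}\neq s_m=\limsup_n v_n$, so Player I wins this run as well.

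The step that requires care — and the reason for placing the jumps of $h$ between the $s_k$'s rather than at them — is pinning $\limsup_n h(v_n)$ down exactly. With the naive choice $h(v)=|\{k:s_k\le v\}|$, the quantity $\limsup_n h(v_n)$ is only confined to the pair $\{m,m+1\}$ when $\limsup_n v_n=s_m$, because Player II can accumulate moves precisely at the jump point $s_m$; in that scenario the single avoidance property $f_0(x)\neq\limsup_n x_n$ would no longer suffice, as $f_0(x)$ could take the value $m$. Separating the jumps from the $s_k$'s removes this ambiguity and is precisely what makes composing with $f_0$ work without strengthening the avoidance. $\Box$\\
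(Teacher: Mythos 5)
Your proof is correct, and it follows the paper's construction of $f$ itself --- composing the integer-valued $f_0$ of Theorem~\ref{t:example with countable range} with the increasing enumeration $k \mapsto s_k$, with essentially the same case analysis for the Cantor-set dichotomy --- but it diverges genuinely on the strategy-transfer step. The paper dispatches that step abstractly: by Lemma~\ref{l:winning strategy in gamma_R} it suffices for Player I to win the restricted game $\Gamma_{i(\mathbb{N})}(f)$, where he simply conjugates his winning strategy for $\Gamma(f_0)$ by $i^{-1}$, order-preservation doing all the work. You instead stay in the unrestricted game and build the reduction by hand: the interleaved thresholds $t_k \in (s_k, s_{k+1})$ and the counting function $h$ are a concrete, specialized substitute for the approximating maps $F_n$ in the proof of Lemma~\ref{l:winning strategy in gamma_R}, exploiting the fact that each $s_m$ is isolated in $\{s_k : k \in \mathbb{N}\}$, so that $\limsup_n v_n = s_m$ forces $\limsup_n h(v_n) = m$ exactly; your closing remark about why the jumps of $h$ must sit strictly between the $s_k$ is precisely the ambiguity that the lemma's $\varepsilon$-bookkeeping resolves in general. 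What each approach buys: the paper's route is shorter once the lemma is available and handles an arbitrary countable range, while yours is self-contained and elementary, needing no general lemma. Two harmless remarks. First, you invoke property (a) ($f_0(x) \neq \limsup_n x_n$) from inside the proof of Theorem~\ref{t:example with countable range} rather than from its statement; this is legitimate since the paper establishes it there, and alternatively you could feed the values $h(v_n)$ into Player I's winning strategy for $\Gamma(f_0)$ as a black box, since your threshold computation of $\limsup_n h(v_n)$ is independent of Player I's moves. Second, your trichotomy for $\{f \ge r\}$ omits the boundary value $r = \sup_k s_k$ when that supremum is finite; the set is empty there as well, because a strictly increasing sequence never attains its supremum, so the required dichotomy is unaffected.
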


\begin{proof}
Let $i : \mathbb{N} \to R$ be a strictly increasing map. Let $f_0 : \mathbb{N}^\mathbb{N} \to \mathbb{N}$ be a function as in Theorem \ref{t:example with countable range}, that is, such that Player I has a winning strategy in $\Gamma(f_0)$, and $C \cap \{f_0 \ge r\}$ is uncountable for each $r \in \mathbb{R}$ and each Cantor set $C \subseteq \mathbb{N}^\mathbb{N}$. We claim that the function defined as $f = i \circ f_0$ works. Clearly, $f : \mathbb{N}^\mathbb{N} \to R$, and it is also clear that $C \cap \{f \ge r\}$ is either uncountable or empty for each $r \in \mathbb{R}$ and each Cantor set $C \subseteq \mathbb{N}^\mathbb{N}$, hence we only have to show that Player I has a winning strategy in $\Gamma(f)$. By Lemma \ref{l:winning strategy in gamma_R} it suffices to check that Player I has a winning strategy in $\Gamma_{i(\mathbb{N})}(f)$.
Let $\sigma_0$ be a winning strategy for Player I in $\Gamma(f_0)$, and define
\begin{equation}
\label{nyero2}
\sigma_{i (\mathbb{N} )} (x_0,v_0, \dots, x_n,v_n) = \sigma_0(x_0, i^{-1}(v_0), \dots, x_n, i^{-1}(v_n)) \ \ (\forall n \in \mathbb{N}).
\end{equation}
Since $i$ is order-preserving, it is easy to check that $\sigma_{i (\mathbb{N} )}$ is a winning strategy for Player I in 
$\Gamma_{i(\mathbb{N})}(f)$.
\end{proof}

Next we state another result of similar sort.
We will strengthen the above counterexamples by showing that such an $f$ can have a co-analytic graph, but on the other hand we have to sacrifice that the range is countable. Note that the complexity of the graph of $f$ is optimal, since if the graph of a function is analytic, then it is well-known that the function is actually Borel measurable, hence by Theorem \ref{thm.PlayerIconverse2} it cannot be a counterexample, and similarly, the range cannot be countable, since it is easy to show that a function with co-analytic graph and countable range is semi-Borel.

\smallskip
Recall that the statement $"V=L"$ is the \textit{Axiom of Constructibility} due to K. Gödel. It is known that it is consistent with $ZFC$, and that it implies the Continuum Hypothesis.

\begin{thm}
  \label{t:consistent example with coanalytic graph}
  Assume $V=L$. Then there exists a function $f : \mathbb{N}^\mathbb{N} \to \mathbb{R}$ with co-analytic graph such that Player I has a winning strategy in $\Gamma(f)$, and $C \cap \{f \ge r\}$ is uncountable for each $r \in \mathbb{R}$ and each Cantor set $C \subseteq \mathbb{N}^\mathbb{N}$. 
\end{thm}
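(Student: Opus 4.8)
The plan is to reuse the skeleton of the proof of Theorem~\ref{t:example with countable range}, replacing the function $\varphi$ by a Borel function $\psi$ suited to real-valued moves, and to run the transfinite recursion so that it produces a co-analytic graph. First I would record a winning strategy for Player~I valid for real-valued $f$. Fix a bijection $c\colon\mathbb{N}\to\mathbb{Q}$, let Player~I open with an arbitrary $x_0$, and, having seen $v_n\in\mathbb{R}$, respond with $x_{n+1}=c^{-1}(q_n)$ where $q_n\in\mathbb{Q}$ is any rational with $|q_n-v_n|<2^{-n}$. Define $\psi\colon\mathbb{N}^\mathbb{N}\to[-\infty,+\infty]$ by $\psi(x)=\limsup_{n\to\infty}c(x_{n+1})$; this is Borel. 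Since $q_n-v_n\to0$, every run consistent with this strategy satisfies $\psi(x)=\limsup_n q_n=\limsup_n v_n$. Hence, exactly as condition~(a) was used in Theorem~\ref{t:example with countable range}, it suffices to build $f$ with $f(x)\neq\psi(x)$ for every $x\in\mathbb{N}^\mathbb{N}$ (the inequality being automatic when $\psi(x)=\pm\infty$, as $f(x)\in\mathbb{R}$); this forces $f(x)\neq\limsup_n v_n$, so Player~I wins. Note that, as the remark preceding the theorem indicates, the range of $f$ must be uncountable, so the copy-into-$\mathbb{N}$ strategy of Theorem~\ref{t:example with countable range} is unavailable and this approximation variant is needed.

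Next I would set up the recursion. Under $V=L$ we have $\mathfrak{c}=\aleph_1$, so I fix the $\Sigma^1_2$-good wellordering $<_L$ of the reals and a single $<_L$-definable enumeration of length $\omega_1$ of all \emph{tasks}, of two kinds: a \emph{domain task} for each $x\in\mathbb{N}^\mathbb{N}$, and an \emph{unboundedness task} for each pair $(C,r)$ with $C$ a Cantor subset of $\mathbb{N}^\mathbb{N}$ and $r\in\mathbb{R}$. I build the graph $G\subseteq\mathbb{N}^\mathbb{N}\times\mathbb{R}$ by recursion; at stage $\xi<\omega_1$ the partial graph built so far is countable, since $\xi$ is a countable ordinal. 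For a domain task for $x$: if $x$ is not yet in the domain, add $(x,y)$ with $y$ the $<_L$-least real with $y\neq\psi(x)$. For an unboundedness task $(C,r)$: let $z$ be the $<_L$-least point of $C$ not yet in the domain (such $z$ exists, as $C$ is uncountable while the domain is countable), and add $(z,y)$ with $y$ the $<_L$-least real satisfying $y\geq r$, $y\neq\psi(z)$, and $y$ distinct from all previously used values (possible, since these constraints exclude only countably many reals from $[r,\infty)$). Every point is assigned at most once and each is assigned via its domain task, so $G$ is the graph of a total function $f$.

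The three required properties then follow routinely. The relation $f\neq\psi$ holds by construction, so Player~I wins $\Gamma(f)$ by the strategy above. The unboundedness tasks guarantee that for every Cantor set $C$ and every $r$ there is $z\in C$ with $f(z)\geq r$, i.e.\ $\{f\geq r\}\cap C\neq\emptyset$; applying this to the continuum-many pairwise disjoint Cantor subsets of a given Cantor set (as in the opening of the proof of Theorem~\ref{t:example with countable range}) shows $\{f\geq r\}\cap C$ is in fact uncountable. Finally, the $\aleph_1$ pairwise distinct values assigned at the unboundedness stages make the range of $f$ uncountable.

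The hard part will be verifying that $G$ is co-analytic. The recursion has been arranged so that every choice is the $<_L$-least real (or pair) meeting a condition that is Borel in the appropriate codes relative to the countable data built so far, which is precisely the setting in which a transfinite recursion under $V=L$ yields a $\mathbf{\Pi}^1_1$ set (the method of Vidny\'{a}nszky for producing co-analytic sets, refining the classical G\"{o}del--Addison coding via condensation; see also Kechris~\cite{Kechris[1995]}). The remaining work is to check the definability hypotheses for our concrete recursion: that $\psi$ is Borel (clear), that Cantor subsets of $\mathbb{N}^\mathbb{N}$ and the relation ``$(C,r)$ is the $\xi$-th task'' admit a Borel coding, and that the selection rules (``not yet in the domain'', ``$<_L$-least $y$ with the stated Borel properties'') are Borel in those codes. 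The subtlest point is reconciling \emph{totality} with co-analyticity: one cannot simply define $f$ by a Borel default off a co-analytic set of special points, since that would express the graph as a union of an analytic and a co-analytic piece; instead totality must be woven into the recursion through the domain tasks, so that a single recursion, and hence a single co-analytic coding, accounts for the value of $f$ at every point.
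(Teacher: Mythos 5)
Your overall architecture matches the paper's proof quite closely: the paper likewise uses $\varphi(x)=\limsup_{n\to\infty} q(x_n)$ for an enumeration $q$ of the rationals, the same rational-approximation strategy for Player I (respond with the least $x_{n+1}$ satisfying $|v_n - q(x_{n+1})|\le 2^{-n}$, so that $\varphi(x)=\limsup_n v_n$ on every consistent run), the same two families of tasks (one per point, to weave totality into the recursion --- your instinct that totality cannot be patched on afterwards by a Borel default is exactly right and is what the paper's $B_2$ and $F_2$ implement), and the same disjoint-Cantor-sets trick to upgrade nonemptiness of $C\cap\{f\ge r\}$ to uncountability. The genuine gap is in the one step you yourself flag as the hard part. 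Your recursion selects at each stage the $<_L$-\emph{least} admissible witness, and you assert that this is ``precisely the setting'' in which the method of Vidny\'anszky yields a $\mathbf{\Pi}^1_1$ set. It is not. Canonical-least selection along the $\Sigma^1_2$ good wellordering is the classical G\"odel--Addison method and yields only a $\Sigma^1_2$ set (indeed $\Delta^1_2$ for the graph of a total function, since the complement of such a graph is $\{(x,t):\exists t'\,((x,t')\in G \wedge t'\neq t)\}$); under $V=L$ such sets need not be co-analytic. Vidny\'anszky's Theorem~1.3 \cite{Vidnyanszky[2014]} --- which the paper invokes as a black box --- has as its key hypothesis that the sections $F_{(A,b)}$ of the co-analytic admissibility relation are \emph{cofinal in the Turing degrees}, and its proof chooses at stage $\alpha$ a witness of sufficiently high degree to code the history of the construction; it is this self-coding, combined with condensation, that drives the complexity down to $\mathbf{\Pi}^1_1$. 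Insisting on the $<_L$-least witness destroys exactly this freedom (the $<_L$-least $y$ with $y\neq\psi(x)$ is essentially the first real of $L$ and codes nothing), so the cofinality hypothesis fails for the singleton sections your selection rule induces, and no co-analyticity bound follows from the construction as you describe it.

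The repair is exactly what the paper does, and your own admissibility conditions already suffice for it: do not prescribe which witness is chosen. Let $B=B_1\sqcup B_2$, where $B_1$ is the (Borel, by Kechris \cite[4.31]{Kechris[1995]}) set of pairs $(C,r)$ with $C$ a Cantor set, and $B_2$ parametrizes the domain tasks; let $F$ consist of the triples (history, task, new pair) satisfying your clauses --- first coordinate new and in $C$, value $\ge r$, value $\neq\psi$, respectively the totality clause. Then $F$ is co-analytic (the clauses referring to the history $A$ are Borel because $A$ is a countable sequence), and each section contains a set of the form $\{x_1\}\times\{t : t\ge x_2\}$, hence is cofinal in the Turing degrees; the theorem then supplies the enumerations $b_\alpha$, $g_\alpha$ with $g_\alpha\in F_{(A_\alpha,b_\alpha)}$ and a co-analytic $G$, after which your verifications that $G$ is the graph of a total function and that properties (a) and (b) hold go through essentially verbatim. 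So the surrounding mathematics of your proposal is sound, but at the decisive definability step the selection of witnesses must be left to the theorem rather than made canonical, and the hypotheses you deferred checking are precisely the ones your $<_L$-least formulation violates.
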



\begin{proof}
  Let $X = \mathbb{N}^\mathbb{N}$. Let $q(0), q(1), \dots$ be an enumeration of the rational numbers, and let $\varphi : X \to \mathbb{R}\cup \{+\infty\}$ be given by $\varphi(x) = \limsup_{n \to \infty} q(x_n)$.
  In order to construct $f : X \to \mathbb{R}$ with co-analytic graph, we use a result of  Vidny\'anszky \cite[Theorem 1.3]{Vidnyanszky[2014]}. Let $B_1 = \{ (C, t)  \in \mathcal{K}(X) \times \mathbb{R} : C$ is a Cantor set$\}$, where $\mathcal{K}(X)$ is the family of non-empty compact sets in $X$ equipped with the Hausdorff metric. Let $B_2 = \mathbb{R}$ and $B = B_1 \sqcup B_2$ be the disjoint union of $B_1$ and $B_2$ making $B$ a subset of the Polish space $(\mathcal{K}(X)\times \mathbb{R}) \sqcup \mathbb{R}$. Let $i : X \to \mathbb{R}$ be a Borel bijection,  $M = \mathbb{R}^2$, and let 
  $$F_1 = \left\{\big(A, (C, r), (y, t)\big) \in M^{\le \omega} \times B_1 \times M : y \in i(C) \setminus \textrm{pr}_1(\textrm{ran}(A)), t \ge r, t \neq \varphi(i^{-1}(y))\right\},$$
  where $\textrm{pr}_1(\textrm{ran}(A))$ is the projection of the range of the sequence $A$ onto the first coordinate. Let 
  \begin{align*}
    F_2 = \Big\{\big(A, y', (y, t)\big) \in M^{\le \omega} \times B_2 \times M : \; & t \neq \varphi\big(i^{-1}(y)\big), y' \not\in \textrm{pr}_1\big(\textrm{ran}(A)\big) \Rightarrow y' = y,
    \\ & y' \in \textrm{pr}_1\big(\textrm{ran}(A)\big) \Rightarrow y \not\in \textrm{pr}_1\big(\textrm{ran}(A) \big)\Big\},
  \end{align*}
  and let $F = F_1 \sqcup F_2 \subseteq M^{\le \omega} \times B \times M$.
  
  We now check that the conditions of Vidny\'anszky's theorem are satisfied. First, a non-empty compact set $C \subseteq \mathbb{N}^\mathbb{N}$ is a Cantor set if and only if it is perfect. Using Kechris  \cite[4.31]{Kechris[1995]} one can easily see that $B_1$ is a Borel subset of $\mathcal{K}(X) \times \mathbb{R}$. Therefore $B$ is a Borel subset of $(\mathcal{K}(X) \times \mathbb{R}) \sqcup \mathbb{R}$. The set $F_1$ is clearly co-analytic, and since $A \in M^{\le \omega}$ is a countable sequence, conditions of the form $y' \in \textrm{pr}_1(\textrm{ran}(A))$ are Borel. Therefore $F_2$ is even Borel, making $F = F_1 \sqcup F_2$ co-analytic. For each $(A, b) \in M^{\le \omega} \times B$, no matter whether $b \in \mathcal{K}(X) \times \mathbb{R}$ or $b \in \mathbb{R}$, the section 
  $$F_{(A, b)} = \{(y, t) \in M : \big(A, b, (y, t)\big) \in F\}$$ contains $\{x_1\} \times \{t : t \ge x_2\}$ for some $(x_1, x_2) \in \mathbb{R}^2$, hence it is cofinal in the Turing degrees (for this notion, see Definition~1.1 of Vidny\'anszky  \cite{Vidnyanszky[2014]}). Therefore the conditions of the theorem are satisfied. 
  
  The conclusion of the theorem assures that there is a co-analytic set $G \subseteq M = \mathbb{R}^2$ and enumerations $B = \{b_\alpha : \alpha < \omega_1\}$, $G = \{g_\alpha : \alpha < \omega_1\}$ and for every $\alpha < \omega_1$ a sequence $A_\alpha \in M^{\le \omega}$ that is an enumeration of $\{g_\beta : \beta < \alpha\}$ such that $g_\alpha \in F_{(A_\alpha, b_\alpha)}$ for every $\alpha < \omega_1$. We note here that the assumption $V = L$ implies the continuum hypothesis. 
  
  First we check that $G$ is the graph of a function with domain $\mathbb{R}$.
  Notice that for $\beta < \alpha$, if $g_\alpha = (y_1, t_1)$ and $g_\beta = (y_2, t_2)$ then $y_1 \neq y_2$. Indeed, $g_\alpha \in F_{(A_\alpha, b_\alpha)}$ implies that $y_1 \not \in \textrm{pr}_1(\textrm{ran}(A_\alpha))$, and since $y_2 \in \textrm{pr}_1(\textrm{ran}(A_\alpha))$, $y_1 \neq y_2$ easily follows.
  To see that for each $y \in \mathbb{R}$, $(y, t) \in G$ for some $t \in \mathbb{R}$, let $\alpha < \omega_1$ be chosen with $b_\alpha = y \in B_2$. Then either $y \in \textrm{pr}_1(\textrm{ran}(A_\alpha))$ and we are done, or $g_\alpha$ is chosen to be $(y, t)$ for some $t \in \mathbb{R}$. Therefore $G$ is indeed a graph of a function with domain $\mathbb{R}$.
  
  Now we define the function $f : X \to \mathbb{R}$ the following way: for each $(y, t) \in G$, let $f(i^{-1}(y)) = t$. Clearly, the graph of $f$ is $(i, \textrm{id})^{-1}(G)$, hence it is co-analytic.
  
  We now show that the defined function $f$ has properties (a) $f(x) \neq \varphi(x)$ for each $x \in X$, and (b) $C \cap \{f \ge r\} \neq \emptyset$ for each $r \in \mathbb{R}$ and each Cantor set $C \subseteq X$. Then we will show that (a) implies that Player I has a winning strategy in $\Gamma(f)$. The proof that (b) implies that $C \cap \{f \ge r\}$ is uncountable for each $r \in \mathbb{R}$ and each Cantor set $C$ is exactly the same as in the proof of Theorem \ref{t:example with countable range}. 
  
  To show (a), let $(x, t) \in X \times \mathbb{R}$ be a pair with $(i(x), t) = g_\alpha \in G$. Then $g_\alpha \in F_{(A_\alpha, b_\alpha)}$ implies $t \neq \varphi(x)$, hence $f(x) = t \neq \varphi(x)$. To show (b), let $C \subseteq X$ be a Cantor set and let $r \in \mathbb{R}$. Let $\alpha < \omega_1$ be the ordinal with $b_\alpha = (C, r)$. Then for $g_\alpha = (y, t)$, using again that $g_\alpha \in F_{(A_\alpha, b_\alpha)}$, $y \in i(C)$ and $t \ge r$, hence $i^{-1}(y) \in C$ and $f(i^{-1}(y)) = t \ge r$.
  
  It remains to show that Player I has a winning strategy in $\Gamma(f)$. Let Player I start by playing $x_{0} = 0$. To a move $v_{n}$ of Player II, Player I responds with an $x_{n+1} \in \mathbb{N}$ chosen to be the smallest natural number satisfying $|v_{n} - q(x_{n+1})| \leq 2^{-n}$. Then, for a run $x_{0},v_{0},x_{1},v_{1},\dots$ of the game, it holds that $\varphi(x) = \limsup_{n \to \infty}v_{n}$. Since $f(x) \neq \varphi(x)$, the run is won by Player I. 
\end{proof}

We note that the assumption $V=L$ cannot be simply dropped from the above theorem. Indeed, it can be derived using the standard proof that Projective Determinacy implies that the Hurewicz theorem holds for all projective sets, moreover, if the graph of $f$ is projective then so is $\{f \ge r\}$ for every $r\in\mathbb{R}$. Thus one could derive an analogue to Theorem \ref{thm.PlayerIconverse2} under Projective Determinacy, assuming only that $f$ has a projective graph.

\smallskip
Despite all the partial results above, we still do not know the answer to the following interesting question.

\begin{qst}
For which $f : X \to \mathbb{R}$ does Player I have a winning strategy in $\Gamma(f)$?
\end{qst}

\section{A game for Baire class 1 functions}\label{secn.gamebaire}

Recall the definition of the game $\Gamma'(f)$ from the Introduction.
Corollary \ref{thm.Baire1} immediately yields the following result:
\begin{cor}
\label{thm.BairePlayerII}
Player II has a winning strategy in $\Gamma'(f)$ if and only if Player II has winning strategies in both games $\Gamma(f)$ and $\Gamma(-f)$, if and only if $f$ is of Baire class 1.
\end{cor}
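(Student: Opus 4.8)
The plan is to prove the two stated equivalences separately, leaning entirely on the results already established for the one-coordinate game $\Gamma$. The equivalence between ``Player II wins both $\Gamma(f)$ and $\Gamma(-f)$'' and ``$f$ is of Baire class~1'' requires essentially no new work: by Lemma \ref{l:player II wins iff f is limsup}, Player II wins $\Gamma(f)$ exactly when $f$ is a limsup function, and likewise Player II wins $\Gamma(-f)$ exactly when $-f$ is a limsup function; Corollary \ref{thm.Baire1} then says that $f$ and $-f$ are simultaneously limsup functions precisely when $f$ is of Baire class~1. I would dispose of this equivalence in one or two sentences.

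The substantive part is the equivalence between winning $\Gamma'(f)$ and winning both $\Gamma(f)$ and $\Gamma(-f)$; the guiding observation is the identity $\liminf_t w_t = -\limsup_t(-w_t)$, which lets the two coordinates of Player II's moves in $\Gamma'(f)$ be handled independently by the two one-coordinate games. For the forward direction I would start from a winning strategy $\tau$ in $\Gamma'(f)$. Projecting $\tau$ onto its first coordinate yields a Player II strategy in $\Gamma(f)$: for every run consistent with this projection, producing a branch $x$ and reals $v_t$, the same branch and the same $v_t$ arise in a legitimate run of $\Gamma'(f)$ against $\tau$ (Player I is free to ignore the $w$-coordinates), whence $f(x) = \limsup_t v_t$. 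Projecting onto the negated second coordinate---that is, having Player II answer $-w_t$---similarly gives a winning strategy in $\Gamma(-f)$, since $\limsup_t(-w_t) = -\liminf_t w_t = -f(x)$.

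For the converse I would fix winning strategies $\sigma$ for $\Gamma(f)$ and $\sigma'$ for $\Gamma(-f)$ and combine them coordinatewise: in $\Gamma'(f)$, after Player I has played $x_0,\dots,x_t$, let Player II respond with the pair $\bigl(\sigma(x_0,\dots,x_t),\, -\sigma'(x_0,\dots,x_t)\bigr)$, where I use that committing to a fixed Player II strategy turns its output into a well-defined function of Player I's moves alone (this is exactly the function $u$ of Lemma \ref{l:player II wins iff f is limsup}). For any run of $\Gamma'(f)$ consistent with this combined strategy, the resulting branch $x$ together with the first coordinates forms a run of $\Gamma(f)$ consistent with $\sigma$, giving $\limsup_t v_t = f(x)$; the same branch together with the negated second coordinates forms a run of $\Gamma(-f)$ consistent with $\sigma'$, giving $\limsup_t(-w_t) = -f(x)$, i.e. $\liminf_t w_t = f(x)$. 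Hence $f(x) = \limsup_t v_t = \liminf_t w_t$ and Player II wins $\Gamma'(f)$.

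The only point demanding care---and the step I expect to be the main obstacle---is the bookkeeping around the information structure: in $\Gamma'(f)$ Player I observes both coordinates of Player II's moves, whereas in $\Gamma(f)$ and $\Gamma(-f)$ Player I observes only one real per round. I would sidestep this by never reasoning about Player I's strategy at all, arguing instead directly at the level of individual runs. Since each winning condition depends only on the produced branch $x$ and on a single sequence of Player II reals, the extra information visible to Player I in $\Gamma'(f)$ is irrelevant, and each coordinate can be certified independently by the corresponding one-coordinate game.
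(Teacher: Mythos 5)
Your proof is correct and takes essentially the same route as the paper, which states the corollary as an immediate consequence of Lemma \ref{l:player II wins iff f is limsup} and Corollary \ref{thm.Baire1}: your coordinatewise projection and combination of Player II strategies (using $\liminf_t w_t = -\limsup_t(-w_t)$ and the fact that a Player II strategy reduces to a function $u\colon T\to\mathbb{R}$) is exactly the routine verification the paper leaves implicit. Your run-by-run handling of the information asymmetry between $\Gamma'(f)$ and the one-coordinate games is the right way to dispose of the only delicate point.
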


New we turn to the existence of a winning strategy for Player I. 

Let $C \subseteq X$ be a closed set, and consider the restriction of $f$ to $C$. The oscillation of $f|_{C}$ at a point $x \in C$ is defined as
$${\rm osc}_f(C,x) = \inf_{\substack{s \in T:\\x \in O(s)}}\,\sup_{y,z \in O(s) \cap C}|f(y) - f(z)|.$$

\begin{lemma}
Suppose that there is a closed set $C \subseteq X$ such that the oscillation of $f|_{C}$ is bounded away from zero: $\inf_{x \in C}{\rm osc}_f(C,x) > 0$. Then Player I has a winning strategy in $\Gamma'(f)$.
\end{lemma}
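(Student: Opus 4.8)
The plan is to construct a winning strategy for Player I \emph{explicitly}, driven only by the number $\varepsilon := \inf_{x \in C}{\rm osc}_f(C,x) > 0$ and, for each $s \in T$ with $O(s) \cap C \neq \emptyset$, the local extremes $M(s) = \sup_{O(s)\cap C} f$ and $m(s) = \inf_{O(s)\cap C} f$. The definition of oscillation gives $M(s) - m(s) \ge \varepsilon$ at every such node, so in every cylinder meeting $C$ there is a \emph{high} point of $C$ with $f$-value exceeding $M(s)-\varepsilon/8$ and a \emph{low} point with $f$-value below $m(s)+\varepsilon/8$. Player I will keep the whole play inside $C$ by always copying the coordinates of a current \emph{target} point of $C$, occasionally switching target; the \emph{type} of each target is prescheduled so that he alternates strictly between aiming at a high point and aiming at a low point, while the \emph{timing} of the switches is driven by Player II. Since each new target extends the prefix already played and lies in the current cylinder, the construction produces a legal branch $x$ all of whose prefixes meet $C$; as $C$ is closed this yields $x \in C$.

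The switching rule is the crux. With $a$ the value of the current target, Player I switches to a fresh target (of the opposite type, in the current cylinder) as soon as, \emph{since the last switch}, both of the following have occurred: (E1) some stage with $v > a - \varepsilon/8$, and (E2) some stage with $w < a + \varepsilon/8$. I would then analyse the run according to how often a switch is triggered. If Player I switches only finitely often, then from some stage on the branch copies a single target $p_*$, so that $x = p_*$ and $f(x) = f(p_*) =: a$ is a genuine, known value; since no further switch fires, one of (E1),(E2) fails forever after, i.e.\ either $v \le a - \varepsilon/8$ eventually, whence $\limsup v \le a - \varepsilon/8 < f(x)$, or $w \ge a + \varepsilon/8$ eventually, whence $\liminf w \ge a + \varepsilon/8 > f(x)$; in both cases Player II loses.

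If instead Player I switches infinitely often, then by the strict alternation there are infinitely many switches of each type. Writing $S = \lim_n M(x_0,\dots,x_{n-1})$ and $s = \lim_n m(x_0,\dots,x_{n-1})$ (monotone limits along the realised branch), the oscillation bound at $x$ gives $S - s \ge \varepsilon$, and $M(\cdot)\ge S$, $m(\cdot)\le s$ throughout. At each high switch the target value satisfies $a > S - \varepsilon/8$, so (E1) yields a stage with $v > a - \varepsilon/8 > S - \varepsilon/4$; at each low switch $a < s + \varepsilon/8$, so (E2) yields a stage with $w < a + \varepsilon/8 < s + \varepsilon/4$. As these stages lie in disjoint inter-switch windows, we get $\limsup v \ge S - \varepsilon/4$ and $\liminf w \le s + \varepsilon/4$, hence
\[ \limsup v - \liminf w \ \ge\ (S-s) - \tfrac{\varepsilon}{2} \ \ge\ \tfrac{\varepsilon}{2} \ >\ 0, \]
so $\limsup v \neq \liminf w$ and Player II loses regardless of the value of $f(x)$.

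The conceptual obstacle I expect—and the reason a naive strategy fails—is that Player I has essentially \emph{no} control over $f(x)$: the hypothesis ${\rm osc}_f(C,x) \ge \varepsilon$ at the limit point forbids him from pinning the branch value to within $\varepsilon$, so he cannot simply steer toward a value contradicting Player II. The strategy sidesteps this by never referring to $f(x)$ in the infinite-switch case (there it is the \emph{gap} $\limsup v > \liminf w$ that wins, independent of $f(x)$) and by using the known value $f(p_*)$ only in the finite-switch case. The delicate point to get exactly right is the trigger (E1)\,\&\,(E2): it must be liberal enough that, if Player II genuinely drives both $\limsup v$ and $\liminf w$ toward $f(p_*)$, both events eventually fire and force yet another switch (ruling out any ``correct'' finite-switch play for Player II), while remaining so structured that the two horns of the dichotomy are simultaneously fatal for Player II. The rest is routine care: handling $M(s)=+\infty$ (take high targets of arbitrarily large value, forcing $\limsup v=+\infty\neq\liminf w$) and the metric bookkeeping guaranteeing the targets converge to one branch.
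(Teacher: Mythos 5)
Your proposal is correct and takes essentially the same approach as the paper: Player I copies alternately chosen near-supremum and near-infimum points of $C$ in the current cylinder, switches target when Player II's announced values come close to the current target value, and the finite-switch/infinite-switch dichotomy yields, respectively, $\limsup v_n \neq f(x)$ or $\liminf w_n \neq f(x)$ in the finite case, and a gap $\limsup v_n - \liminf w_n$ bounded below by a positive constant in the infinite case. The only (harmless) deviations are your symmetric conjunction trigger (E1)\,\&\,(E2) in place of the paper's phase-dependent triggers $|v_n - f(x(k))| < \epsilon$ and $|w_n - f(x(k+1))| < \epsilon$, and your gap estimate via the monotone limits $S,s$ and the oscillation at the limit point $x \in C$, where the paper instead chains inequalities using the oscillation at the target points $x(k+1)$; you also explicitly flag the $M(s) = +\infty$ case, which the paper leaves implicit.
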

\begin{proof}
Assume that ${\rm osc}_f(C,x) \geq 5\epsilon > 0$ for each $x \in C$. We will first describe a strategy of Player I and then we will show that it is a winning strategy. To define the moves of Player I in a particular run, we will use recursion to define natural numbers $n_0<n_1<n_2<\ldots$ and sequences $s_0, s_1, s_2, \ldots \in T$ (these may depend on the moves of Player II).

Let $n_{0} = 0$, and let $s_{0}$ be the empty sequence. Suppose that, for some even number $k \in \mathbb{N}$, Player I's moves prior to the stage $n_{k}$ have been defined.

Let $s_{k} \in T$ denote the sequence of Player I's moves prior to the stage $n_{k}$. Define $\alpha_{k} = \sup\{f(x): x \in O(s_{k}) \cap C\}$, and choose a point $x(k) \in O(s_{k}) \cap C$ so that $\alpha_{k} - \epsilon < f(x(k))$. Starting with the stage $n_{k}$, Player I produces his moves using the point $x(k)$, that is, he plays $x_{n} = x(k)_n$ at a stage $n \geq n_{k}$. He continues doing so until the first stage, say $n_{k+1} > n_{k}$, that Player II makes a move $(v_{n_{k+1}}, w_{n_{k+1}})$ such that $|v_{n_{k+1}} - f(x(k))| < \epsilon$. If no such stage occurs, then Player I goes on using the point $x(k)$ to make his moves until the end of the game.

Let $s_{k+1} \in T$ denote the sequence of moves produced by Player I prior to the stage $n_{k+1}$. Define $\beta_{k+1} = \inf\{f(x): x \in O(s_{k+1}) \cap C\}$, and choose a point $x(k+1) \in O(s_{k+1}) \cap C$ so that $f(x(k+1)) < \beta_{k+1} + \epsilon$. Starting with the stage $n_{k+1}$, Player I produces the moves using $x(k+1)$, that is he plays $x_{n} = x(k+1)_n$ at a stage $n \geq n_{k+1}$. He continues doing so until the first stage, say $n_{k+2} > n_{k+1}$, that Player II makes a move $(v_{n_{k+2}},w_{n_{k+2}})$ such that $|w_{n_{k+2}} - f(x(k+1))| < \epsilon$. If no such stage occurs, then Player I goes on using the point $x(k+1)$ until the end of the game.

We show that the thus defined strategy is winning.

Suppose first that only finitely many stages $n_{0},n_{1},\dots$ occur, the last one being $n_{k}$. For concreteness, suppose that $k$ is even. In this case Player I uses the point $x(k)$ to generate his moves until the end of the game. Moreover, there is no $n > n_{k}$ such that $|v_{n} - f(x(k))| < \epsilon$. This implies that $\limsup v_{n} \neq f(x(k))$, and hence the run is won by Player I. Likewise, if the last one of the sequence $n_{0},n_{1},\dots$ is the stage $n_{k+1}$ where $k$ is even, then Player I generates the point $x(k+1)$, and there is no $n > n_{k+1}$ such that $|w_{n} - f(x(k+1))| < \epsilon$. Therefore $\liminf w_{n} \neq f(x(k+1))$, and hence the run is won by Player I.

Suppose that infinitely many stages $n_{0},n_{1},\dots$ occur. From the above definitions we get for each even $k \in \mathbb{N}$
\begin{align*}
v_{n_{k+1}} &> f(x(k)) - \epsilon\\
&> \alpha_{k} - 2\epsilon\\
&=(\alpha_{k} - \beta_{k+1}) + \beta_{k+1} - 2\epsilon\\
&\geq (\alpha_{k} - \beta_{k+1}) + f(x(k+1)) - 3\epsilon\\
&\geq (\alpha_{k} - \beta_{k+1}) + w_{n_{k+2}} - 4\epsilon.
\end{align*}
Let $\alpha_{k+1} = \sup\{f(x):x \in C \cap O(s_{k+1})\}$. Since the sequence $s_{k+1}$ extends $s_{k}$, we have $\alpha_{k} \geq \alpha_{k+1}$. By the assumption, the oscillation of $f|_{C}$ at the point $x(k+1) \in C$ is at least $5\epsilon$, hence $\alpha_{k+1} - \beta_{k+1} \geq 5\epsilon$. Combining these facts we obtain that for each even $k \in \mathbb{N}$ it holds that $v_{n_{k+1}} \geq w_{n_{k+2}} + \epsilon$. This, however, means that $\limsup v_{n} > \liminf w_{n}$, implying a win for Player I.
\end{proof}

\begin{rem}\rm
The above construction of the winning strategy for Player I is similar to that in Kiss \cite{Kiss[2019]}. In both cases Player I zooms in on a particular element of $C$ until Player II triggers a switch to another element. The main difference is that here Player I undergoes two alternating types of switches: even switches are different from the odd. An odd switch, say $(k+1)$st (where $k$ is even) is triggered when Player II makes a move such that $v_{n}$ is close to $f(x(k))$. Player I reacts by switching to a point $x(k+1)$ with a low value of $f$. Even switches, say $(k+2)$nd, are triggered when Player II makes a move such that $w_{n}$ is close to $f(x(k+1))$. Player I reacts by switching to a point $x(k+2)$ of $C$ with a high value of $f$.
\end{rem}


\begin{thm}
The game $\Gamma'(f)$ is determined.
\end{thm}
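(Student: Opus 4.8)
The plan is to deduce determinacy from a clean dichotomy. By Corollary~\ref{thm.BairePlayerII}, Player II has a winning strategy in $\Gamma'(f)$ exactly when $f$ is of Baire class~$1$; so if $f$ is of Baire class~$1$ there is nothing to prove, and the whole task reduces to showing that \emph{if $f$ is not of Baire class~$1$, then Player I has a winning strategy}. For the latter I would invoke the preceding lemma, which grants Player I a winning strategy as soon as there is a nonempty closed set $C \subseteq X$ with $\inf_{x \in C}{\rm osc}_f(C,x) > 0$. Thus everything comes down to manufacturing such a $C$ from the mere failure of $f$ to be Baire class~$1$.

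First I would use the Baire characterization theorem (see Kechris~\cite{Kechris[1995]}): since $f$ is not of Baire class~$1$, there is a nonempty closed set $C_{0} \subseteq X$ such that $f|_{C_{0}}$ has no point of continuity, i.e.\ ${\rm osc}_f(C_{0},x) > 0$ for every $x \in C_{0}$. Writing $D_{n} = \{x \in C_{0} : {\rm osc}_f(C_{0},x) \ge 2^{-n}\}$, I would note that $x \mapsto {\rm osc}_f(C_{0},x)$ is upper semicontinuous on $C_{0}$ (if its value at $x$ is below $c$, this is witnessed by a single cylinder $O(s) \ni x$, and then the value is below $c$ at every point of $O(s) \cap C_{0}$), so each $D_{n}$ is relatively closed, while $C_{0} = \bigcup_{n} D_{n}$. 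As $C_{0}$ is a nonempty closed subspace of the Polish space $X$, it is a Baire space, so some $D_{N}$ has nonempty relative interior; I would fix a nonempty relatively open $V \subseteq D_{N}$ and set $C = \overline{V}$ (the closure taken in $X$, which is contained in $C_{0}$ since $C_{0}$ is closed). The claim is then that ${\rm osc}_f(C,x) \ge 2^{-N}$ for every $x \in C$, which is exactly the hypothesis of the preceding lemma.

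The step I expect to be the main obstacle is exactly this claim, because passing from $C_{0}$ to the smaller set $C$ can only \emph{shrink} oscillations, so the bound on $C_{0}$ does not transfer for free. The resolution is to exploit that $V$ is relatively open, say $V = W \cap C_{0}$ with $W \subseteq X$ open: inside $W$ the sets $C_{0}$ and $V$ coincide, and $V \subseteq C$. Concretely, fix $x \in C = \overline{V}$ and a cylinder $O(s) \ni x$; since $x \in \overline{V}$ and $O(s)$ is a neighbourhood of $x$, there is a point $p \in O(s) \cap V$, and since ${\rm osc}_f(C_{0},p) \ge 2^{-N}$ I can choose a cylinder $O(s')$ with $p \in O(s') \subseteq O(s) \cap W$ and $\sup_{y,z \in O(s') \cap C_{0}}|f(y)-f(z)| \ge 2^{-N}$. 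Because $O(s') \subseteq W$ we have $O(s') \cap C_{0} = O(s') \cap V \subseteq O(s) \cap C$, whence $\sup_{y,z \in O(s) \cap C}|f(y)-f(z)| \ge \sup_{y,z \in O(s') \cap C_{0}}|f(y)-f(z)| \ge 2^{-N}$; taking the infimum over all cylinders $O(s) \ni x$ yields ${\rm osc}_f(C,x) \ge 2^{-N}$, uniformly in $x \in C$. With $C$ in hand, the preceding lemma supplies Player I with a winning strategy, completing the dichotomy and hence the determinacy of $\Gamma'(f)$. I note that this argument uses only the Baire category theorem and the Baire characterization theorem, so no appeal to Martin's determinacy is required.
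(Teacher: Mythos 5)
Your proof is correct and follows essentially the same route as the paper: the dichotomy via Corollary~\ref{thm.BairePlayerII}, then producing a closed set with oscillation bounded away from zero and invoking the preceding lemma. The only difference is that you spell out in full the Baire-category argument (via Baire's characterization theorem and upper semicontinuity of the oscillation, handling correctly the subtlety that oscillation can shrink when passing to $C = \overline{V}$) which the paper outsources to Kuratowski and to the arguments in Kiss~\cite{Kiss[2019]}.
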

\begin{proof}
If $f$ is a function in Baire class 1, then Player II has a winning strategy by Lemma \ref{thm.BairePlayerII}. Suppose that $f$ is not a function in Baire class 1. Then (see Kuratowski \cite[Theorem 2 and Remark 1 on p.395]{Kuratowski[1966]}) there exists a non-empty closed set $K \subseteq X$ such that the set of discontinuity points of $f|_{K}$ contains an open subset of $K$. Using the Baire category theorem and the arguments as in Kiss \cite[p.9]{Kiss[2019]} one can show that there is a non-empty closed set $C \subseteq K$ such that the oscillation of $f|_{C}$ is bounded away from zero. The preceding lemma then implies that Player I has a winning strategy.
\end{proof}

\begin{rem}
It is not completely clear which results of the paper use the countability of $A$ in an essential way. It seems to us that almost all results go through without the assumption that $A$ is countable, and the only really problematic issues are the applications of the Hurewicz theorem and the Kechris--Louveau--Woodin theorem in the proofs of Theorems \ref{thm.PlayerIconverse}, \ref{thm.PlayerIconverse2} and \ref{cor.semi-Borel->determined}. 
\end{rem}

\end{document}